\theoremstyle{plain}
\newtheorem{thm}{Theorem}[section]
\newtheorem*{theorem*}{Main theorem}
\newtheorem{mainthm}[]{Theorem}[]
\newtheorem{lem}[thm]{Lemma}
\newtheorem{prop}[thm]{Proposition}
\theoremstyle{definition}
\theoremstyle{definition}
\newtheorem{rem}[thm]{Remark}
\newcommand{\dr}{\,\mathrm{d}r_}
\newcommand{\dd}{\,\mathrm{d}}
\newcommand{\RR}{\mathbb{R}}
\title[Off-diagonal estimates of the Bergman kernel associated to Siegel varieties]{Off-diagonal estimates of Bergman kernel associated to Siegel varieties}
\author{Anilatmaja Aryasomayajula}
\address{Department of Mathematics, Indian Institute of Science Education and Research (IISER) Tirupati, 
Transit campus at Sri Rama Engineering College, Karkambadi Road,
Mangalam (B.O),Tirupati-517507, India.}
\email{anil.arya@iisertirupati.ac.in}
\author{Harinarayanan G}
\address{Department of Mathematics, Indian Institute of Science Education and Research (IISER) Tirupati, 
Transit campus at Sri Rama Engineering College, Karkambadi Road,
Mangalam (B.O),Tirupati-517507, India.}
\email{harinarayanan@students.iisertirupati.ac.in}
\date{\today}
\subjclass[2020]{}
\keywords{Siegel modular forms, Bergman kernel}
\begin{document}
\begin{abstract}
For $g\geq 2$, let $\Gamma\subset\mathrm{Sp}(2g,\mathbb{R})$ be a discrete subgroup, which is either a cocompact subgroup or an arithmetic subgroup without torsion elements, and let $\mathbb{H}_{g}$ denote the Siegel upper half space of genus $g$. Let $X_{\Gamma}:=\Gamma\backslash\mathbb{H}_{g}$ denote the quotient space, which is a complex manifold of dimension $g(g+1)/2$. Let $\Omega_{X_{\Gamma}}$ denote the cotangent bundle, and let $\ell:=\mathrm{det}(\Omega_{X_{\Gamma}})$ denote the determinant line bundle of $\Omega_{X_{\Gamma}}$. For any  $Z,W\in X_{\Gamma}$, let $d_{\mathrm{S}}(Z,W)$ denote the geodesic distance between the points $Z$ and $W$ on $X_{\Gamma}$.

\vspace{0.15cm}\noindent
For any $k\geq 1$, let $H^{0}(X_{\Gamma},\ell^{\otimes k})$ denote the complex vector space of global sections of the line bundle $\ell^{\otimes k}$, and let $\|\cdot\|_{k}$ denote the point-wise norm on $\ell^{\otimes k}$. Let $\mathcal{B}_{X_{\Gamma}}^{\ell^{ k}}$ denote the Bergman kernel associated to $H^{0}_{L^{2}}(X_{\Gamma},\ell^{\otimes k})\subset H^{0}(X_{\Gamma},\ell^{\otimes k})$, vector subspace of $L^2$ global sections. For any $k\gg 1$, and $Z=X+iY,\,W=U+iV\in X_{\Gamma}$ with $\mathrm{det}(V)>\mathrm{det}(Y)$ (we identify $X_{\Gamma}$ with its universal cover $\mathbb{H}_{g}$), when $\Gamma$ is cocompact, we have the following estimate
\begin{align*}
\|\mathcal{B}_{X_{\Gamma}}^{\ell^{k}}(Z,W)\|_{\ell^{ k}}=O_{X_{\Gamma}}\bigg(\frac{k^{g(g+1)/2}}{\cosh^{k(g+1)-g^{2}-g}(d_{\mathrm{S}}(Z,W)/2\sqrt{2})}\bigg);
\end{align*}
when $\Gamma$ is an arithmetic subgroup, we have the following estimate
\begin{align*}
\|\mathcal{B}_{X_{\Gamma}}^{\ell^{ k}}(Z,W)\|_{\ell^{ k}}=O_{X_{\Gamma}}\bigg(\frac{k^{g(g+1)/4}( \mathrm{det}{(4Y)})^{k(g+1)/2}}{\big(\mathrm{det}(V) \big)^{(k(g+1)-g-1)/2}}+\frac{k^{g(g+1)/2}}{\cosh^{k(g+1)-g^{2}-g}(d_{\mathrm{S}}(Z,W)/2\sqrt{2})}\bigg),
\end{align*}

\noindent
 where the implied constants in both the above estimates depend only on $X_{\Gamma}$.
\end{abstract}
\maketitle
\section{Introduction}\label{sec-1}
In this section, we briefly recall the history, background, and relevance associated with our main results. We then describe our main results, and discuss our strategy.

\vspace{0.2cm}
\subsection{History and background of the problem}\label{sec-1.1}
Estimates of Bergman kernels associated to high tensor powers of holomorphic manifolds defined over complex manifolds is a subject of great interest in complex geometry. Optimal
estimates are derived in the setting of compact complex manifolds, by the likes of Tian , Zelditch, Catalan, et al., and more recently by Ma and Marinescu.

However, off-diagonal estimates even in the setting of compact complex manifolds are difficult to derive. We refer the reader to  \cite{delin}, \cite{christ}, \cite{luzel}, and \cite{dai} for off-diagonal estimates of Bergman kernels associated to high tensor powers of holomorphic line bundles defined over a complex manifold. In  \cite{ma}, Ma and Marinescu have also derived off-diagonal estimates of the Bergman kernel, and the estimates are the sharpest in the most general context of an arbitrary compact complex manifold.

In \cite{anil1} and \cite{anil2}, the first named author and Majumder have derived off diagonal estimates of the Bergman kernel associated to high tensor powers of the cotangent bundle defined over a hyperbolic Riemann surface of finite volume, both in the compact and noncompact setting. Furthermore, in \cite{anil3}, the first named author along with Roy and Sadhukhan extended the estimates to compact hyperbolic ball quotients.

In this article, we extend estimates to Siegel modular varieties of finite volume, both in the compact setting, and the noncompact setting emanating from arithmetic subgroups of 
$\mathrm{Sp}(2g,\mathbb{R})$. These estimates will come handy in a future article \cite{anil4}, where we derive sub-convexity estimates of Siegel cusp forms of genus two, associated to the full modular group $\mathrm{Sp}(4,\mathbb{Z})$.

\vspace{0.2cm}
\subsection{Statements of results}\label{sec-1.2}
We now describe the two main results of the article, which are proved in section \ref{sec-3.2}. So, we briefly introduce the notation, which is required to state the  results. We will elaborately discuss the notation and the background material necessary to prove the main results, in sections \ref{sec-2.1}--\ref{sec-2.4}.

Let $\mathbb{H}_{g}$ denote the Siegel upper half space of genus $g$. Let $\Gamma\subset \mathrm{Sp(2g,\mathbb{R})}$ be a cocompact subgroup, or an arithmetic subgroup without elliptic fixed points. Let $\Omega_{X_{\Gamma}}$ denote cotangent bundle, and let $\ell:=\mathrm{det}(\Omega_{X_{\Gamma}})$ denote the determinant line bundle of $\Omega_{X_{\Gamma}}$, also known as the canonical line bundle. For any $k\geq 1$, let $H^{0}(X_{\Gamma}, \ell^{\otimes k})$ denote complex vector subspace of global sections of $\ell^{\otimes k}$. Let $\| \cdot\|_{\ell^{k}}$ and $\|\cdot\|_{\ell^{k},L^{2}}$  denote the point-wise and $L^2$ norms on $H^{0}(X_{\Gamma},\ell^{\otimes k})$, respectively, and let $H^{0}_{L^{2}}(X_{\Gamma},\ell^{\otimes k})\subset H^{0}(X_{\Gamma},\ell^{\otimes k})$ denote the subspace of $L^2$ sections. Let $\mathcal{B}_{X_{\Gamma}}^{\ell^k}$ denote the Bergman kernel of $H^{0}_{L^{2}}(X_{\Gamma},\ell^{\otimes k})$.

\vspace{0.1cm}
The main results of the article are the following two theorems.

\vspace{0.1cm}
\begin{mainthm}\label{mainthm1}
Let $\Gamma\subset \mathrm{Sp}(2g,\mathbb{R})$ be a cocompact subgroup. With notation as above, for $g\geq 2$ and $k\gg 1$, we have the following estimate
\begin{align}\label{mainthm1-eqn}
\|\mathcal{B}_{X_{\Gamma}}^{\ell^{k}}(Z,W)\|_{\ell^{ k}}=O_{X_{\Gamma}}\bigg(\frac{k^{g(g+1)/2}}{\cosh^{k(g+1)-g^{2}-g}(d_{\mathrm{S}}(Z,W)/2\sqrt{2})}\bigg),
\end{align}
where the implied constant depends only on $X_{\Gamma}$.
\end{mainthm}

\vspace{0.1cm}
\begin{mainthm}\label{mainthm2}
Let $\Gamma$ be an arithmetic subgroup. With notation as above, for $g\geq 2$ and $k\gg 1$, and for $Z=X+iY,\,W=U+iV\in X_{\Gamma}$ with $\mathrm{det}(V)>\mathrm{det}(Y)$ (we identify $X_{\Gamma}$ with its universal cover $\mathbb{H}_{g}$),   we have the following estimate
\begin{align}\label{mainthm2-eqn}
\|\mathcal{B}_{X_{\Gamma}}^{\ell^{ k}}(Z,W)\|_{\ell^{ k}}=O_{X_{\Gamma}}\bigg(\frac{k^{g(g+1)/4}( \mathrm{det}{(4Y)})^{k(g+1)/2}}{\big(\mathrm{det}(V) \big)^{(k(g+1)-g-1)/2}}+\frac{k^{g(g+1)/2}}{\cosh^{k(g+1)-g^{2}-g}(d_{\mathrm{S}}(Z,W)/2\sqrt{2})}\bigg),
\end{align}
where the implied constant depends only on $X_{\Gamma}$.
\end{mainthm}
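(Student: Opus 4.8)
The plan is to transfer the problem to the universal cover $\mathbb{H}_{g}$, on which the Bergman kernel is explicit, and then to sum over the deck group $\Gamma$. Recall that under $\gamma\in\mathrm{Sp}(2g,\mathbb{R})$ with lower blocks $C,D$ the determinant line bundle $\ell$ carries the automorphy factor $\det(CZ+D)^{-(g+1)}$, so that a global section of $\ell^{\otimes k}$ is precisely a Siegel modular form of weight $m:=k(g+1)$; this explains the occurrence of $k(g+1)$ in the exponents. The reproducing kernel of the space of $L^{2}$-holomorphic sections on $\mathbb{H}_{g}$ is, for $m\gg1$ and up to the dimensional constant $C_{g,m}$, equal to $\det\big((Z-\overline{W})/2i\big)^{-m}$, and hence its point-wise norm is
\[
\big\|\mathbb{B}_{\mathbb{H}_{g}}^{\ell^{k}}(Z,W)\big\|_{\ell^{k}}=C_{g,m}\left(\frac{\det(Y)\det(V)}{\big|\det\big((Z-\overline{W})/2i\big)\big|^{2}}\right)^{m/2}.
\]
For $m\gg1$ the Poincaré series $\sum_{\gamma\in\Gamma}\mathbb{B}_{\mathbb{H}_{g}}^{\ell^{k}}(Z,\gamma W)$, suitably twisted by the automorphy factor, converges absolutely and represents $\mathcal{B}_{X_{\Gamma}}^{\ell^{k}}$; since the point-wise norm $\|\cdot\|_{\ell^{k}}$ is $\Gamma$-invariant, this yields the basic majorization $\|\mathcal{B}_{X_{\Gamma}}^{\ell^{k}}(Z,W)\|_{\ell^{k}}\le\sum_{\gamma\in\Gamma}\|\mathbb{B}_{\mathbb{H}_{g}}^{\ell^{k}}(Z,\gamma W)\|_{\ell^{k}}$, which is the workhorse of both estimates.

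For the hyperbolic (cocompact) part I would first rewrite the point-pair invariant geometrically. The ratio $\det(Y)\det(V)/|\det((Z-\overline{W})/2i)|^{2}$ is an $\mathrm{Sp}(2g,\mathbb{R})$-invariant of the pair $(Z,W)$, and the reduction theory of the symmetric space $\mathbb{H}_{g}=\mathrm{Sp}(2g,\mathbb{R})/U(g)$ expresses it through the geodesic distance, giving (in the normalization fixed in sections \ref{sec-2.1}--\ref{sec-2.4}) the clean bound $\det(Y)\det(V)/|\det((Z-\overline{W})/2i)|^{2}\le\cosh^{-2}\big(d_{\mathrm{S}}(Z,W)/2\sqrt{2}\big)$. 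Combined with the Stirling asymptotics $C_{g,m}=O(k^{g(g+1)/2})$ for the dimensional constant, this gives $\|\mathbb{B}_{\mathbb{H}_{g}}^{\ell^{k}}(Z,W)\|_{\ell^{k}}=O\big(k^{g(g+1)/2}\cosh^{-m}(d_{\mathrm{S}}(Z,W)/2\sqrt{2})\big)$. It then remains to bound the lattice sum $\sum_{\gamma\in\Gamma}\cosh^{-m}\big(d_{\mathrm{S}}(Z,\gamma W)/2\sqrt{2}\big)$. I would isolate the nearest orbit point, which realizes the quotient distance $d_{\mathrm{S}}(Z,W)$, and compare the remaining sum to the integral $\int_{\mathbb{H}_{g}}\cosh^{-m}(d_{\mathrm{S}}(Z,P)/2\sqrt{2})\dmu(P)$ via lattice-point counting. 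The exponential volume growth of $\mathbb{H}_{g}$, whose real dimension is $g(g+1)=g^{2}+g$, forces the series to converge only for $m>g^{2}+g$ and, after absorbing the growth into the $\cosh$-factor, leaves a decay of order $\cosh^{-(m-g^{2}-g)}(d_{\mathrm{S}}(Z,W)/2\sqrt{2})$; this loss of exactly $\dim_{\mathbb{R}}\mathbb{H}_{g}$ powers is what turns $k(g+1)$ into $k(g+1)-g^{2}-g$. This already proves \eqref{mainthm1-eqn} and produces the second term of \eqref{mainthm2-eqn}.

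For the arithmetic case the same majorization applies, but the Poincaré series can no longer be summed uniformly down to the cusps, where $d_{\mathrm{S}}(Z,\gamma W)$ fails to grow. I would therefore split $\Gamma$ into the elements keeping $\gamma W$ at bounded distance from the relevant parabolic direction and the rest. The non-cuspidal elements produce, exactly as above, the second term $k^{g(g+1)/2}\cosh^{-(m-g^{2}-g)}(d_{\mathrm{S}}(Z,W)/2\sqrt{2})$ of \eqref{mainthm2-eqn}. The cuspidal contribution I would estimate directly, using reduction theory (Siegel sets) to place $Z,W$ in a cusp neighbourhood and the Fourier--Jacobi expansion $f(Z)=\sum_{T\ge0}a(T)e^{2\pi i\,\mathrm{tr}(TZ)}$ of the $L^{2}$ sections. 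Bounding the kernel by its extremal characterization and retaining the dominant (minimal) Fourier--Jacobi mode, the Gaussian $L^{2}$-normalization becomes a $\Gamma$-integral whose Stirling asymptotics yield the prefactor $(\det V)^{-(m-g-1)/2}$ and the polynomial factor $k^{g(g+1)/4}$; the latter is the square root of the cocompact exponent, reflecting the one-sided Cauchy--Schwarz bound used at the cusp. The point-wise norm factor $(\det 4Y)^{m/2}$ of $\ell^{k}$ at $Z$ supplies the numerator, and the hypothesis $\det(V)>\det(Y)$ selects the dominant Gaussian, giving the first term of \eqref{mainthm2-eqn}.

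The main obstacle is precisely this cusp analysis in the arithmetic, higher-rank setting $g\ge2$. Unlike the rank-one situation of hyperbolic Riemann surfaces, the boundary of $\mathbb{H}_{g}$ has several $\Gamma$-conjugacy classes of rational parabolic subgroups and boundary components of different corank, so the reduction theory and the Fourier--Jacobi expansions must be organised carefully, and the estimates made uniform over a full set of Siegel sets. Pinning down the exact powers of $\det(V)$, $\det(Y)$ and of $k$ in the cuspidal term, and verifying that the minimal mode indeed dominates under $\det(V)>\det(Y)$, is the delicate part; by comparison the cocompact lattice-sum estimate and the convergence of the Poincaré series (for $m>g^{2}+g$) are routine once the point-pair invariant is identified with $\cosh(d_{\mathrm{S}}/2\sqrt{2})$.
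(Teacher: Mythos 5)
Your overall skeleton is the paper's: majorize $\|\mathcal{B}_{X_{\Gamma}}^{\ell^{k}}(Z,W)\|_{\ell^{k}}$ by the $\Gamma$-sum of the model kernel on $\mathbb{H}_{g}$ (this is estimate \eqref{bk-est2}), identify the point-pair invariant $\det(4YV)/|\det(Z-\overline{W})|^{2}$ with $\prod_{j}\cosh^{-2}(r_{j})$ and hence with $\cosh^{-2}(d_{\mathrm{S}}/2\sqrt{2})$, and control the non-parabolic part of the sum by a lattice-point count against the volume growth $e^{(g^{2}+g)r}$ of $\mathbb{H}_{g}$, which is exactly the Jorgenson--Lundelius mechanism of Theorem \ref{thm4} and correctly accounts for the loss of $g^{2}+g$ powers of $\cosh$. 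That portion of your argument is sound and coincides with the paper's treatment of the second term of \eqref{mainthm2-eqn}.

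The gap is in the cusp contribution, which is precisely the part that produces the first term and is the actual content of Theorem \ref{mainthm2}. You propose to handle it via Siegel sets, the Fourier--Jacobi expansion of $L^{2}$ sections, and an ``extremal characterization'' of the kernel, retaining the minimal mode; but you never derive the exponents $(\det V)^{-(k(g+1)-g-1)/2}$, $(\det 4Y)^{k(g+1)/2}$, or $k^{g(g+1)/4}$ from that setup --- you assert them and then yourself flag this as ``the delicate part.'' Moreover, the extremal/Cauchy--Schwarz route is in tension with the shape of the claimed bound: it controls $\|\mathcal{B}(Z,W)\|$ by diagonal data $\|\mathcal{B}(Z,Z)\|^{1/2}\|\mathcal{B}(W,W)\|^{1/2}$, which near the cusp gives the Kramer--Mandal growth $k^{3g(g+1)/4}$ rather than a quantity decaying in $\det(V)$ with the stated power, so it is not clear the step would even go through as described. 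What the paper does instead is elementary and completely explicit: it splits off the stabilizer $\Gamma_{\infty}=\bigcup_{j=0}^{g-1}\Gamma_{\infty}^{j}$ of the boundary, writes out $\det(\gamma W-\overline{Z})$ for the translations $W\mapsto W+S$, $S\in\mathrm{Sym}_{g}(\mathbb{Z})$ (and the analogous unipotent families for $j\geq 1$), dominates the lattice sum over $S$ by an integral over $\mathrm{Sym}_{g}(\mathbb{R})$, and evaluates that integral by Hua's matrix beta integral, $\int_{T=T^{t}}\det(T^{2}+\mathrm{Id}_{g})^{-k(g+1)/2}[\dd T]=O_{g}(k^{-g(g+1)/4})$ (estimate \eqref{thm2-eqn9}); combined with the dimensional constant $C_{k(g+1)}=O(k^{g(g+1)/2})$ and the bound $\det(V+Y)\geq\det(V)$, this is where every exponent in the first term of \eqref{mainthm2-eqn} comes from. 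To complete your proof you would either need to carry out your Fourier--Jacobi analysis in full (and resolve the Cauchy--Schwarz issue above), or replace it by this direct summation over the parabolic subgroup.
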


\vspace{0.1cm}
\begin{rem}
 Theorems \ref{mainthm1} and \ref{mainthm2} are clearly an extension proved in \cite{anil1} and \cite{anil2}, to the setting of Siegel modular varieties. Furthermore, as in the setting of Riemann surfaces, our estimates are stronger than the off-diagonal estimate for the Bergman kernel proved in \cite{ma}. However, it is be noted that the estimate proved in \cite{ma} is a more general estimate, and valid for all compact complex manifolds.

\vspace{0.1cm}
Moreover, as the point wise norm of the Bergman kernel is symmetric in $Z=X+iY,\,W=U+iV\in X_{\Gamma}$ (we identify $X_{\Gamma}$ with its universal cover $\mathbb{H}_{g}$, we can also derive the following estimate from our method
\begin{align*}
\|\mathcal{B}_{X_{\Gamma}}^{\ell^{ k}}(Z,W)\|_{\ell^{ k}}=O_{X_{\Gamma}}\bigg(\frac{k^{g(g+1)/4}( \mathrm{det}{(4V)})^{k(g+1)/2}}{\big(\mathrm{det}(Y) \big)^{(k(g+1)-g-1)/2}}+\frac{k^{g(g+1)/2}}{\cosh^{k(g+1)-g^{2}-g}(d_{\mathrm{S}}(z,w)/2\sqrt{2})}\bigg),
\end{align*}
where the implied constant depends only on $X_{\Gamma}$.

Moreover, estimate \eqref{mainthm2-eqn} describes the asymptotic behavior of the Bergman kernel, as one of the variables $Z$ and $W$, approaches the boundary $\mathcal{D}_{\mathrm{bdy}}$ (see equation \eqref{bdy} in section \ref{sec-2.2} for description of the boundary divisor $\mathcal{D}_{\mathrm{bdy}}$), and the other remains bounded. 

\vspace{0.1cm}
Furthermore, when $\Gamma$ is cocompact and $Z=W$, we immediately recover the well established diagonal estimate of the Bergman kernel, i. e.,
\begin{align*}
\|\mathcal{B}_{X_{\Gamma}}^{\ell^{ k}}(Z,Z)\|_{\ell^{ k}}=O_{X_{\Gamma}}\big(k^{g(g+1)/2}\big),
\end{align*}
where the implied constant depends only on $X_{\Gamma}$.

\vspace{0.1cm}
When $\Gamma$ is an arithmetic subgroup and $Z=W$, adapting arguments from \cite{mandal} about the domain where the Bergman kernel attains its maximum, we recover the diagonal estimate of Kramer and mandal
\begin{align*}
\|\mathcal{B}_{X_{\Gamma}}^{\ell^{ k}}(Z,Z)\|_{\ell^{ k}}=O_{X_{\Gamma}}\big(k^{3g(g+1)/4}\big),
\end{align*}
where the implied constant depends only on $X_{\Gamma}$.
\end{rem}

\vspace{0.2cm}
\subsection{Strategy of the proof}\label{sec-1.3}
Our strategy is similar to the techniques employed in \cite{anil1}, \cite{anil2} and \cite{anil3}. Let $S_{k(g+1)}(\Gamma)$ denote the complex vector space of Siegel cusp forms of weight $k(g+1)$, which is equipped with the Petersson inner product and Petersson norm, which is denoted by $\|\cdot \|_{\mathrm{pet}}$. Let $\mathcal{B}_{\Gamma}^{k(g+1)}$ denote the Bergman kernel for the space $S_{k(g+1)}(\Gamma)$, which can be explicitly expressed as an infinite series (see section \ref{sec-2.2} for the precise expression for the series).

Recalling the isometry $H^{0}_{L^{2}}(X_{\Gamma},\ell^{k})\simeq \mathcal{S}_{k(g+1)}(\Gamma)$, for any $Z,W\in X_{\Gamma}$, we have the following equality
\begin{align*}
\|\mathcal{B}_{X_{\Gamma}}^{\ell^{ k}}(Z,W)\|_{\ell^{ k}}=\|\mathcal{B}_{\Gamma}^{k(g+1)}(Z,W)\|_{\mathrm{pet}}.
\end{align*}

Using the infinite series expansion for $\mathcal{B}_{\Gamma}^{k(g+1)}$, we estimate $\mathcal{B}_{X_{\Gamma}}^{\ell^{ k}}$. Furthermore, to estimate the infinite series expansion for $\mathcal{B}_{\Gamma}^{k(g+1)}$, we derive a formula for counting elements of $\Gamma$. This is a generalization of the formula proved by Jorgenson and Lundelius in \cite{jl}, from the setting of Riemann surfaces to Siegel modular varieties, or stated equivalently, from the setting of genus $g=1$ to genus $g\geq 2$.

Hereafter, we refer to the counting formula as Jorgenson-Lundelius type estimate, as is proved in section \ref{sec-3.1} as Theorem \ref{thm4}.

\vspace{0.2cm}
\section{Background material}\label{sec-2}
In this section we set up the notation required to prove Theorem \ref{mainthm1} and Theorem \ref{mainthm2}. Furthermore, we recall important results from literature which will assist us in proving our main results.

\vspace{0.2cm}
\subsection{Siegel modular variety}\label{sec-2.1}
Let $\mathrm{Sym}_{g}(\mathbb{R})$ denote the set of $g\times g$ symmetric matrices, and let
\begin{align*}
    \mathbb{H}_{g}:=\big\lbrace Z=X+iY\big|\,X,Y\in \mathrm{Sym}_{g}(\mathbb{R}),\,\mathrm{det}(Y)>0\big\rbrace
\end{align*}
denote the Siegel upper space of genus $g\geq 2$.  The natural metric on $\mathbb{H}_{g}$ is the Siegel metric, which is the natural metric compatible with its complex structure.
Let $\mu_{\mathrm{S}^{\mathrm{Vol}_{\mathrm{S}}}}$ denote the associated volume form, and at any $Z=(z_{jk}=x_{jk}+iy_{jk})_{1\leq j,k\leq g}\in\mathbb{H}_{g}$, the Siegel metric is given
by the following formula
\begin{align}\label{hypvol}
    \mu_{\mathrm{S}}^{\mathrm{Vol}_{\mathrm{S}}}(Z):=\frac{\displaystyle\bigwedge_{1\leq j\leq k\leq g}\dd x_{jk}\wedge \dd y_{jk}}{\mathrm{det}(Y)^{g+1}}.
\end{align}

The sympletic group $\mathrm{Sp}(2g,\mathbb{R})$ act on the Siegel upper half space $\mathbb{H}_{g}$ via fractional linear transformations. A discrete subgroup $\Gamma\subset \mathrm{Sp}(2g,\mathbb{R})$ is called cocompact, if the quotient space $X_{\Gamma}:=\Gamma\backslash\mathbb{H}_{g}$ admits the structure of a complex manifold. Let $\Gamma_{0}:=\mathrm{Sp}(2g,\mathbb{Z})$ denote the full modular group. A discrete subgroup $\Gamma\subset \mathrm{Sp}(2g,\mathbb{R})$ is called an arithmetic subgroup, if the index of the subgroup $\Gamma\cap\Gamma_{0}$ is finite both in $\Gamma_{0}$ and $\Gamma$.

For the rest of the article, $\Gamma$ denotes either a cocompact subgroup or an arithmetic subgroup without elliptic fixed points. The quotient space $X_{\Gamma}$ is a complex manifold of dimension $g(g+1)/2$, which is compact when $\Gamma$ is cocompact, and is non compact, when $\Gamma$ is an arithmetic subgroup. The Siegel metric descends to define a metric on $X_{\Gamma}$, and locally, it is given by the same formula as the one described in equation \eqref{hypvol}.

The geodesic distance between any two given points $Z,W\in X_{\Gamma}$ (we identify $X_{\Gamma}$ with its universal cover $\mathbb{H}_{g}$), which is determined by the Siegel metric, is again denoted by $d_{\mathrm{S}}(Z,W)$, and is again given by the following formula
\begin{align}\label{hyp-dist}
d_{\mathrm{S}}(Z,W)=\sqrt{2}\bigg(\sum_{j=1}^{g}\log^{2}\bigg( \frac{1+\sqrt{\rho_{j}(Z,W)}}{1-\sqrt{\rho_{j}(Z,W)}}\bigg)\bigg)^{1/2},
\end{align}
where $\lbrace \rho_1(Z,W),\ldots, \rho_{g}(Z,W)\rbrace$ denotes the set of eigenvalues of the cross-ratio matrix
\begin{align*}
\rho(Z,W):=(Z-W)(\overline{Z}-W)^{-1}(\overline{Z}-\overline{W})(Z-\overline{W})^{-1}.
\end{align*}

We now define the Dirichlet Fundamental domain of $X_{\Gamma}$ centered at a given $z\in X_{\Gamma}$ (we identify $X_{\Gamma}$ with its universal cover $\mathbb{H}_{g}$
\begin{align}\label{dir}
\mathcal{F}_{\Gamma,Z}:=\big\lbrace W\in \mathbb{H}_{g}\big|\,d_{\mathrm{S}}(Z, W)<d_{\mathrm{S}}(Z,\gamma W),\,\mathrm{for\,\,all}\,\,\gamma\in\Gamma\backslash\lbrace\mathrm{Id}\rbrace\big\rbrace.
\end{align}

When $\Gamma$ is cocompact, we define the injectivity radius as
\begin{align}\label{irad-1}
r_{\Gamma}:=\frac{1}{2}\inf\big\lbrace d_{\mathrm{S}}(Z,\gamma Z)\big|\,Z\in \mathbb{H}_{g},\,\,\mathrm{for\,\,all}\,\,\gamma\in\Gamma\backslash\lbrace\mathrm{Id}\rbrace\big\rbrace.
\end{align}
Observe that $2r_{\Gamma}$ is the length of the shortest geodesic on $X_{\Gamma}$.  

\vspace{0.2cm}
\subsection{$\Gamma$ is an arithmetic subgroup}\label{sec-2.2}
Without loss of generality, we assume that $X_{\Gamma}$ admits only one $\Gamma$ equivalent class of rational boundary component, and let
\begin{align*}
&\overline{X_{\Gamma}}:=\bigsqcup_{j=1}^{g}\Gamma_{j}\backslash\mathbb{H}_{j}\sqcup\lbrace i\infty\rbrace, \\
&\mathrm{where}\,\,\Gamma_{j}\subset \mathrm{Sp}(2j,\mathbb{Z}), \,\,\mathrm{for\,\,all}\,\,1\leq j\leq g-1.
\end{align*}
Furthermore, let
\begin{align}\label{bdy}
\mathcal{D}_{\mathrm{bdy}}:=\overline{X_{\Gamma}}\backslash X_{\Gamma}
\end{align}
denote the boundary divisor of $\overline{X_{\Gamma}}$.

Without loss of generality, we assume that $\Gamma$ a fundamental domain $\mathcal{F}_{\Gamma}^{\mathrm{Siegel}}$, which is known as the Siegel fundamental domain, and satisfies the following properties:
\begin{enumerate}
\item[(i)]
For every $Z\in\mathcal{F}_{\Gamma}^{\mathrm{Siegel}}$, and $\big(\begin{smallmatrix} A &B\\C& D\end{smallmatrix}\big)\in\Gamma$, we have 
$\mathrm{det}(CZ+D)\geq 1$.
\item[(ii)] For every $Z=X+iY\in\mathcal{F}_{\Gamma}^{\mathrm{Siegel}}$, $Y=(y_{jk})_{1\leq j,k\leq g}$ is Minkowski reduced, i.e.,  
$y_{k,k+1}\geq  0$ for $(1 \leq  k\leq g- 1)$, and for all primitive vectors $h\in\mathbb{Z}^{g}$, we have $h^tYh\geq  y_{k,k}$ for $(1 \leq  k\leq n)$, where a vector $h = (h_1,\ldots h_g)^{t} \in \mathbb{Z}^{g}$ is called a primitive vector, if for  $1 \leq k \leq g$, we have $\mathrm{gcd}(h_k ,\ldots,h_g) = 1$.
\item[(iii)] For every $Z=X+iY\in\mathcal{F}_{\Gamma}^{\mathrm{Siegel}}$ with $X:=(x_{jk})_{1\leq j,k\leq g}$, we have $|x_{jk}|<1/2$.
\end{enumerate}

Let $\Gamma_{\infty}\subset \Gamma$ denote the stabilizer of the boundary component $\mathcal{D}_{\mathrm{bdy}}$, which is given by the following equation
\begin{align}\label{gamma-infty0}
&\Gamma_{\infty}:=\bigcup_{j=0}^{g-1}\Gamma_{\infty}^{j},\notag                                                                                                               \\
&\mathrm{where}\,\,\Gamma_{\infty}^{0}:=\bigg\lbrace \begin{pmatrix} \mathrm{Id}_{g}&S\\ 0 &\mathrm{Id}_{g}\end{pmatrix}\bigg|\,S\in\mathrm{Sym}_{g}(\mathbb{Z})\bigg\rbrace,
\end{align}
and for $1\leq j\leq g-1$, we have
\begin{align}\label{gamma-inftyj}
&\Gamma_{\infty}^{j}:= \bigg\lbrace \begin{pmatrix} A&AS\\ 0 &A^{-t}\end{pmatrix}\bigg|\,A=\begin{pmatrix} \mathrm{Id}_{j}&0\\ L &\mathrm{Id}_{g-j}\end{pmatrix},\,\,S=\begin{pmatrix} 0&H^{t}\\ H &M\end{pmatrix}\bigg\rbrace,\notag \\[0.1cm]
&\mathrm{where}\,\,L,\,H\in\mathbb{Z}^{(g-j)\times j},\,M\in\mathrm{Sym}_{g-j}(\mathbb{Z}).
\end{align}

We define the injectivity radius of $X_{\Gamma}$ as
\begin{align}\label{irad-2}
 r_{\Gamma}:=\frac{1}{2}\inf\big\lbrace d_{\mathrm{S}}(Z,\gamma Z)\big|\,Z\in \mathcal{F}_{\Gamma},\,\,\mathrm{for\,\,all}\,\,\gamma\in\Gamma\backslash\Gamma_{\infty}\big\rbrace,
\end{align}
and  $\mathcal{F}_{\Gamma}$ denotes a fixed fundamental domain of $X_{\Gamma}$.

\vspace{0.2cm}
\subsection{Bergman kernel}\label{sec-2.3}
Let $\Omega_{X_{\Gamma}}$ denote the cotangent bundle of $X_{\Gamma}$, and let $\ell:=\mathrm{det}(\Omega_{X_{\Gamma}})$ denote the determinant line bundle of the cotangent bundle $\Omega_{X_{\Gamma}}$, which is also known as the canonical line bundle. also known as the canonical line bundle. For $k\geq 1$,
let $H^{0}(X_{\Gamma},\ell^{\otimes k})$ denote the space of global holomorphic sections, which is equipped with a point-wise norm $\|\cdot\|_{\ell^{k}}$ and an $L^{2}$ norm
$\|\cdot\|_{\ell^k,L^{2}}$, which is induced by an $L^2$ inner product.

Let $\mathcal{B}^{\ell^{k}}_{X_{\Gamma}}$ denote the Bergman kernel associated to the vector subspace
\begin{align*}
H^{0}_{L^{2}}(X_{\Gamma},\ell^{\otimes k})=H^{0}\big(\overline{X_{\Gamma}},\overline{\ell}^{\otimes k}\otimes O_{\overline{X_{\Gamma}}}(-\mathcal{D}_{\mathrm{bdy}})\big)\subset  H^{0}(X_{\Gamma},\ell^{\otimes k}),
\end{align*}
where $\overline{\ell}^{\otimes k}$ denotes the extension of $\ell^{\otimes k}$ to $\overline{X_{\Gamma}}$.

We now recall the isometry $H^{0}_{L^{2}}(X_{\Gamma},\ell^{\otimes k})\simeq \mathcal{S}_{k(g+1)}(\Gamma)$, where $\mathcal{S}_{k(g+1)}(\Gamma)$ denotes the complex vector
space of weight $k(g+1)$ cusp forms, which is equipped with a point-wise Petersson norm $\|\cdot\|_{\mathrm{pet}}$, and a Petersson inner product $\langle \cdot,\cdot\rangle_{\mathrm{pet}}$.

For any $f\in\mathcal{S}_{k(g+1)}(\Gamma)$, at any $Z=X+iY\in \mathbb{H}_{g}$, the the Petersson norm of $f$ is given by the following formula
\begin{align}\label{petnorm}
\|f(Z)\|_{\mathrm{pet}}^{2}:=\mathrm{det}(Y)^{k(g+1)}\big|f(Z)\big|^{2}.
\end{align}
Observe that the Petersson norm $\|\cdot\|_{\mathrm{pet}}$ is invariant with respect to action of $\Gamma$, and hence, defines a function on $X_{\Gamma}$.

Let $\lbrace f_{1},\ldots,f_{d_{k}} \rbrace$ denote an orthonormal basis of $\mathcal{S}_{k(g+1)}(\Gamma)$, with respect to the Petersson inner product, where $d_{k}$ denotes the dimension of
$\mathcal{S}_{k(g+1)}(\Gamma)$ as a complex vector space. For any $Z,W\in\mathbb{H}_{g}$, the Bergman kernel associated to $\mathcal{S}_{k(g+1)}(\Gamma)$ is given by the following formula
\begin{align*}
\mathcal{B}_{\Gamma}^{k(g+1)}(Z,W)=\sum_{j=1}^{d_{k}}f_{j}(Z)\overline{f_{j}(W)}.
\end{align*}
At any $Z=X+iY,\,W=U+iV\in \mathbb{H}_{g}$, the Petersson norm is given by the following formula
\begin{align}\label{bk-petnorm}
\|\mathcal{B}_{\Gamma}^{k(g+1)}(Z,W)\|_{\mathrm{pet}}=\mathrm{det}(YV)^{k(g+1)/2}\big| \mathcal{B}_{\Gamma}^{k(g+1)}(Z,W)\big|
\end{align}
By Reisz representation theorem, the definition of the Bergman kernel is independent of the choice of orthonormal basis for $\mathcal{S}_{k(g+1)}(\Gamma)$. For any $Z=X+iY,W=U+iV\in \mathbb{H}_{g}$, the Bergman kernel $\mathcal{B}_{\Gamma}^{k(g+1)}$ is also described by the following formula
\begin{align}\label{bkseries}
& \mathcal{B}_{\Gamma}^{k(g+1)}(Z,W)=\sum_{\gamma={\tiny{(\begin{smallmatrix} A &B\\C& D\end{smallmatrix})}\in\Gamma}}\frac{(4^{g})^{k(g+1)/2} C_{g(k+1)} }{(\mathrm{det}{(Z-\overline{\gamma W} )})^{k(g+1)}\cdot(\mathrm{det}(\overline{CZ+D}))^{k(g+1)}},\notag \\
& \mathrm{where}\,\,C_{k(g+1)}=O_{X_{\Gamma}}\big(k^{g(g+1)/2}\big),
\end{align}
where the implied constant depends only on $X_{\Gamma}$.

We now derive an estimate for the Bergman kernel, which is well known for experts. However, for the sake of completion, and for the convenience of the reader, we reprove the estimate
here.

Let $Z=X+iY,\, W =U+iV \in\mathbb{H}_{g}$, and let $\gamma \in \mathrm{Sp}(2g, \mathbb{R})$. Let $\lbrace \rho_1(Z,W),\ldots,\rho_{g}(Z,W) \rbrace$ be the eigenvalues of the cross-ratio matrix $\rho(Z,W)$. Observe that $\rho(Z,W)=\rho(\gamma Z,\gamma W )$. Furthermore,  since the action of  $ \mathrm{Sp}(2g, \mathbb{R})$ on $\mathbb{H}_{g}$ is transitive, there exists a  $\sigma \in \mathrm{Sp}(2g, \mathbb{R})$ such that $\sigma Z = i\mathrm{Id}_{g}$. For example the following matrix
\begin{align*}
\sigma = \begin{pmatrix}
Y^{-1/2} & -Y^{-1/2}X \\0 & Y^{1/2}
\end{pmatrix},
\end{align*}
satisfies the equation $\sigma Z = i\mathrm{Id}_{g}$.

For  $\sigma$ as above, we compute
\begin{align}\label{eq:sigmaW-definition}
\sigma W = Y^{-1/2}(W-X)Y^{-1/2}
\end{align}

For brevity of notation, we now represent $\rho(Z,W)=\rho(\gamma Z,\gamma W)$ by $\rho$, and the set of eigenvalues of $\rho$ by $\lbrace \rho_{1},\ldots,\rho_{g}\rbrace$.  For $1\leq j\leq g$, put $r_j = \tanh^{-1}(\sqrt{\rho_j})$. Then, we have
\begin{align}\label{bkseries-1}
d_{\mathrm{S}}(Z,W) = d_{\mathrm{S}}(\sigma Z,\sigma W) = \bigg( 8 \sum_{j=1}^{g} r_j^2 \bigg)^{1/2}.
\end{align}
From the definition of cross-ratio matrix $\rho(Z,W)$, we have
\begin{align}\label{bkseries-2}
\rho(\sigma Z,\sigma W) =  \rho(i\mathrm{Id}_{g},\sigma W) = (i\mathrm{Id}_{g} - \sigma W)(-i\mathrm{Id}_{g} - \sigma W)^{-1}(-i\mathrm{Id}_{g} - \overline{\sigma W})(i\mathrm{Id}_{g} - \overline{\sigma W})^{-1}.
\end{align}
Combining equations \eqref{bkseries-1} and \eqref{bkseries-2}, we deduce that
\begin{align}\label{bkseries-3}
 \mathrm{det}{(\mathrm{Id}_{g} - \rho(i\mathrm{Id}_{g},\sigma W))}=\mathrm{det}{( \mathrm{Id}_{g}- \rho(Z,W))}= \prod_{j=1}^{g}(1 - \tanh^2{r_j})=\prod_{j=1}^{g}\frac{1}{\cosh^2(r_j)}.
\end{align}
Using equation \eqref{bkseries-2}, we now compute
\begin{align}\label{bkseries-4}
\mathrm{det}{(\mathrm{Id}_{g} - \rho(i\mathrm{Id}_{g},\sigma W))} = \frac{\mathrm{det}{\big( (i\mathrm{Id}_{g} + \sigma W)(i\mathrm{Id}_{g} - \overline{\sigma W})-(i\mathrm{Id}_{g} - \sigma W)(i\mathrm{Id}_{g} + \overline{\sigma W}) \big)}}{\mathrm{det}{(i\mathrm{Id}_{g} + \sigma W)}\mathrm{det}{(i\mathrm{Id}_{g} - \overline{\sigma W})}}=\notag \\[0.1cm]\frac{\mathrm{det}{\big(2i(\sigma W - \overline{\sigma W})\big)}}{\mathrm{det}{(i\mathrm{Id}_{g} + \sigma W)}\mathrm{det}{(i\mathrm{Id}_{g} - \overline{\sigma W})}}.
\end{align}
Furthermore, from equation \eqref{eq:sigmaW-definition}, we also have
\begin{align}\label{bkseries-5}
& \mathrm{det}{(i\mathrm{Id}_{g} + \sigma W)} = \mathrm{det}{(Y^{-1/2}(W - \overline{Z})Y^{-1/2})}, \notag \\
& \mathrm{det}{(i\mathrm{Id}_{g} - \sigma W)} = \mathrm{det}{(Y^{-1/2}(Z - \overline{W})Y^{-1/2})}.
\end{align}
Combining equations \eqref{bkseries-4}, and \eqref{bkseries-5}, we derive
\begin{align}\label{bkseries-6}
\mathrm{det}{(\mathrm{Id}_{g} - \rho(i\mathrm{Id}_{g},\sigma W))}=\frac{\mathrm{det}{(2i(\sigma W - \overline{\sigma W}))}}{(-1)^{g}\big|\mathrm{det}{(Y^{-1/2}(Z - \overline{W})Y^{-1/2})}\big|^{2}}
 =\notag \\ \frac{4^g\mathrm{det}{( Y^{-1/2}VY^{-1/2} )}}{\left|\mathrm{det}{(Y^{-1/2}(Z- \overline{W})Y^{-1/2})}\right|^2}.
\end{align}
Combining equations \eqref{bkseries-3} and \eqref{bkseries-6}, we arrive at the following equality
\begin{align}\label{bkseries-7}
\frac{ \mathrm{det}{(4YV)}}{\left|\mathrm{det}{(Z- \overline{W})}\right|^2}=\prod_{j=1}^{g}\frac{1}{\cosh^2{r_j}}.
\end{align}
Hence, from equations \eqref{bk-petnorm}, \eqref{bkseries}, \eqref{bkseries-7}, for $\gamma={\tiny{(\begin{smallmatrix} A &B\\C& D\end{smallmatrix}})}\in\Gamma$, using the fact that
\begin{align*}
\mathrm{Im}(\gamma W)=\frac{V}{\big|\mathrm{det}(CW+D)\big|^{2}},
\end{align*}
we infer the following estimate for the Bergman kernel
\begin{align}\label{bk-est}
&\|\mathcal{B}_{\Gamma}^{k(g+1)}(Z,W)\|_{\mathrm{pet}}=\bigg|\sum_{\gamma={\tiny{(\begin{smallmatrix} A &B\\C& D\end{smallmatrix})}\in\Gamma}}\frac{C_{g(k+1)}\big( \mathrm{det}(4YV)\big)^{k(g+1)/2}}{\mathrm{det}{(Z-\overline{\gamma W}) }^{k(g+1)}\cdot\mathrm{det}(\overline{CZ+D})^{k(g+1)}}\bigg|\leq\notag \\ &
 \sum_{\gamma={\tiny{(\begin{smallmatrix} A &B\\C& D\end{smallmatrix})}\in\Gamma}}\frac{ C_{g(k+1)}\big( \mathrm{det}(4YV)\big)^{k(g+1)/2}}{\big|\mathrm{det}{(Z-\overline{\gamma W} )}\big|^{k(g+1)}\cdot\big|\mathrm{det}(\overline{CZ+D})\big|^{k(g+1)}}\leq\sum_{\gamma\in\Gamma}\prod_{j=1}^{g}\frac{C_{g(k+1)}}{\cosh^{k(g+1)}(r_j(Z,\gamma W))}.
\end{align}
Finally, from estimate \eqref{bk-est}, and using isometry  $H^{0}_{L^{2}}(X_{\Gamma},\ell^{\otimes k})\simeq \mathcal{S}_{k(g+1)}(\Gamma)$, we deduce that
\begin{align}\label{bk-est2}
\|\mathcal{B}_{X_{\Gamma}}^{\ell^{k}}(Z,W)\|_{\ell^k}=\|\mathcal{B}_{\Gamma}^{k(g+1)}(Z,W)\|_{\mathrm{pet}}\leq \sum_{\gamma\in\Gamma}\prod_{j=1}^{g}\frac{C_{g(k+1)}}{\cosh^{k(g+1)}(r_j(Z,\gamma W))}.
\end{align}

\vspace{0.2cm}
\subsection{Siegel metric in polar coordinates}\label{sec-2.4}
We now describe the Siegel metric in polar coordinates. Any $Z\in \mathbb{H}_{g}$, can be expressed as
\begin{align*}
Z:=q\,\exp(2R)i,\,\,\mathrm{where} \,R=\big(\begin{smallmatrix}D_{r}&0\\0&-D_{r}  \end{smallmatrix}\big),\,\,\mathrm{and}\,\,D_{r}= \Bigg(\begin{smallmatrix}
r_{1} & & \\ & \ddots & \\ & & r_{g}\end{smallmatrix}\Bigg)
\,\,\mathrm{is\,\,a\,\,diagonal\,\,matrix}, \\[0.1cm]\mathrm{and}\,\,q\in K:=\mathrm{Sp}(2g,\mathbb{R})\cap\mathrm{O}(2g,\mathbb{R}).
\end{align*}

In polar coordinates, at any $Z\in \mathbb{H}_{g}$, the Siegel metric is given by the following formula
\begin{align}\label{polar-metric}
\mu_{\mathrm{S}}^{\mathrm{Vol}_{\mathrm{S}}}(Z):=\prod_{j=1}^{n}\sinh^{2}(r_j)\prod_{1\leq j<k\leq g}\sinh^{2}((r_{j}-r_{k})/2)\prod_{1\leq j<k\leq g}\sinh^{2}((r_{j}+r_{k})/2)
\bigwedge_{j=1}^{g}\dd r_{j}\wedge \mu(q),
\end{align}
where $\mu(q)$ denotes the Haar measure on $K$.

Let $\vec{a}:=(a_1,\ldots,a_{g})$, where for each $1\leq j\leq g$, $a_{j}\in\mathbb{R}_{>0}$. Set
\begin{align}\label{dg}
\mathbb{D}_{g}(Z,\vec{a}):=\big\lbrace  W\in\mathbb{H}_{g}\big|\,|r_{j}(Z,W)|=\big|\tanh^{-1}\big(\sqrt{\rho_{j}(Z,W)}\big)\big|\leq a_{j},\,\,1\leq j\leq g \big\rbrace.
\end{align}
Let $\mathrm{Vol}_{\mathrm{S}}(\mathbb{D}_{g}(Z,\vec{a}))$ denote the volume of $\mathbb{D}_{g}(Z,\vec{a})$ with respect to the volume associated to the Siegel metric $\mu_{\mathrm{S}}$.  From equation \eqref{polar-metric}, we infer that
\begin{align}\label{polar-metric-2}
&\mathrm{Vol}_{\mathrm{S}}(\mathbb{D}_{g}(Z,\vec{a}))=\int_{\mathbb{D}_{g}(\vec{a})}\mu_{\mathrm{S}}^{\mathrm{Vol}_{\mathrm{S}}}(Z)=\notag \\&\int_{-a_{1}}^{a_{1}}\cdots\int_{-a_{g}}^{a_{g}} \prod_{j=1}^{n}\sinh^{2}(r_j)\prod_{1\leq j<k\leq g}\sinh^{2}((r_{j}-r_{k})/2)\prod_{1\leq j<k\leq g}\sinh^{2}((r_{j}+r_{k})/2)\bigwedge_{j=1}^{g}\dd r_{j}.
\end{align}

\vspace{0.2cm}
\section{Estimates}\label{sec-3}
In this section, we prove a  Jorgenson-Lundelius type estimate from \cite{jl}, using which we prove Theorem \ref{mainthm1} and Theorem \ref{mainthm2}.
\subsection{Jorgenson-Lundelius type estimate}\label{sec-3.1}
In this section, we first compute an upper bound for the volume of a geodesic ball with radius $r$ in Siegel upper half space of genus $g$, using which we prove a Jorgenson-Lundelius type 
estimate from \cite{jl}.

In the following lemma, we derive an upper bound for the volume of $D_{g}(\vec{r})$.

\vspace{0.1cm}
\begin{prop}\label{prop1}
With notation as above, for any $r\in\mathbb{R}_{>0}$ with $\vec{r}=(r,r)^{t}\in\mathbb{R}_{>0}^{2}$, we have the following estimate
\begin{align}\label{prop1-eqn}
\mathrm{Vol}_{\mathrm{S}}(\mathbb{D}_{2}(Z,r))\leq 32  \cosh^2{r}\sinh^4{r}.
\end{align}
\begin{proof}
From equation \eqref{polar-metric-2}, we have
\begin{align*}
 &\mathrm{Vol}_{\mathrm{S}}(\mathbb{D}_{2}(Z,\vec{r}))=\int_{\mathbb{D}_{2}(Z,\vec{r})}\mu_{\mathrm{S}}(Z)=\notag\\&4\int_{0}^{r}\int_{0}^{r}\sinh^{2}(r_1)\sinh^{2}(r_2)\sinh^{2}((r_{1}-r_{2})/2)\sinh^{2}((r_{1}+r_{2})/2)\dr1\dr2.
\end{align*}
Substituting  the formula $2\sin((x+y)/2)\sinh((x-y)/2)=\cosh(x)-\cosh(y)$ on the right hand-side of the above inequality, we now compute
\begin{align}\label{prop1-eqn1}
\mathrm{Vol}_{\mathrm{S}}(\mathbb{D}_{2}(Z,r))\leq  \mathcal{I}_{1}+\mathcal{I}_{2}-2\mathcal{I}_3,
\end{align}
where 
\begin{align}\label{prop1-eqn2}
&\mathcal{I}_{1}:=\int_{0}^{r}\int_{0}^{r} \sinh^2{r_1}\cosh^2{r_1}\sinh^2{r_2}\dr1\dr2;\notag\\ &\mathcal{I}_{2}:=\int_{0}^{r}\int_{0}^{r} \sinh^2{r_1}\sinh^2{r_2}\cosh^2{r_2}\dr1\dr2;\notag\\
&\mathcal{I}_{3}:=\int_{0}^{r}\int_{0}^{r}\sinh^2{r_1}\sinh^2{r_2}\cosh{r_1}\cosh{r_2} \dr1\dr2.
\end{align}
We now evaluate each of the above integrals. For the first integral, we have
\begin{align}\label{prop1-eqn3}
\mathcal{I}_1 = \int_{0}^{r} \int_{0}^{r} \sinh^2{r_1}\cosh^2{r_1}\sinh^2{r_2} \dr1 \dr2 
=\int_{0}^{r}  \sinh^2{r_2}\dr2 \int_{0}^{r} \sinh^2{r_1}\cosh^2{r_1}\dr1=\notag\\
\frac{1}{48}( \sinh{r}\cosh{r} - r )( 7\sinh^3{r}\cosh{r} + 3\sinh{r}\cosh{r}  - 3r ).
\end{align}
For the second integral, we have 
\begin{align}\label{prop1-eqn4}
\mathcal{I}_2 = \int_{0}^{r}\int_{0}^{r} \sinh^2{r_2}\cosh^2{r_2}\sinh^2{r_1} \dr1 \dr2 =
\int_{0}^{r} \sinh^2{r_1}\dr1 \int_{0}^{r} \sinh^2{r_2}\cosh^2{r_2}\dr2=\notag\\
\frac{1}{48}( \sinh{r}\cosh{r} - r )( 7\sinh^3{r}\cosh{r} + 3\sinh{r}\cosh{r}  - 3r ).
\end{align}
For the third integral, we have
\begin{align}\label{prop1-eqn5}
\mathcal{I}_3 = \int_{0}^{r}  \int_{0}^{r} \sinh^2{r_1}\sinh^2{r_2}\cosh{r_1}\cosh{r_2} \dr1 \dr2 
=\notag\\ \int_{0}^{r}   \sinh^2{r_1}\cosh{r_1}\dr1 \int_{0}^{r}   \sinh^2{r_2}\cosh{r_2}\dr2= \frac{1}{9}\sinh^6{r}.
\end{align}
Combining equations \eqref{prop1-eqn1}--\eqref{prop1-eqn5}, we arrive at the following estimate
\begin{align*}
\mathrm{Vol}_{\mathrm{S}}(\mathbb{D}_{2}(Z,\vec{r}))\leq 32\cosh^2{r}\sinh^4{r},
\end{align*}
which completes the proof of the proposition.
\end{proof}
\end{prop}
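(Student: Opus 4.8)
The plan is to evaluate the double integral in equation \eqref{polar-metric-2} specialized to $g=2$ in closed form, and then bound the resulting expression. First I would start from
\begin{align*}
\mathrm{Vol}_{\mathrm{S}}(\mathbb{D}_{2}(Z,\vec{r}))=4\int_{0}^{r}\int_{0}^{r}\sinh^{2}(r_1)\sinh^{2}(r_2)\sinh^{2}\big((r_1-r_2)/2\big)\sinh^{2}\big((r_1+r_2)/2\big)\,\mathrm{d}r_1\,\mathrm{d}r_2,
\end{align*}
and apply the hyperbolic product-to-sum identity $2\sinh\big((r_1+r_2)/2\big)\sinh\big((r_1-r_2)/2\big)=\cosh(r_1)-\cosh(r_2)$. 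Squaring this gives $4\sinh^{2}\big((r_1+r_2)/2\big)\sinh^{2}\big((r_1-r_2)/2\big)=\big(\cosh(r_1)-\cosh(r_2)\big)^{2}$, so that, after the leading factor of $4$ cancels, the integrand is exactly $\sinh^{2}(r_1)\sinh^{2}(r_2)\big(\cosh(r_1)-\cosh(r_2)\big)^{2}$.

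Next I would expand $\big(\cosh(r_1)-\cosh(r_2)\big)^{2}=\cosh^{2}(r_1)-2\cosh(r_1)\cosh(r_2)+\cosh^{2}(r_2)$, which splits the double integral into the three pieces $\mathcal{I}_1,\mathcal{I}_2,\mathcal{I}_3$ recorded in the statement, whence $\mathrm{Vol}_{\mathrm{S}}(\mathbb{D}_{2}(Z,\vec{r}))=\mathcal{I}_1+\mathcal{I}_2-2\mathcal{I}_3$. Each of these is a product of one-variable integrals that are elementary: using $\int_{0}^{r}\sinh^{2}t\,\mathrm{d}t=\tfrac12(\sinh r\cosh r-r)$, $\int_{0}^{r}\sinh^{2}t\cosh^{2}t\,\mathrm{d}t=\tfrac{1}{32}\sinh(4r)-\tfrac{r}{8}$, and $\int_{0}^{r}\sinh^{2}t\cosh t\,\mathrm{d}t=\tfrac13\sinh^{3}r$, I would obtain $\mathcal{I}_1=\mathcal{I}_2$ in the form displayed in the statement (rewriting $\sinh(4r)=4\sinh r\cosh r+8\sinh^{3}r\cosh r$ to pass to the $\cosh,\sinh,r$ presentation) and $\mathcal{I}_3=\tfrac19\sinh^{6}r$.

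Finally I would combine these closed forms, giving $\mathrm{Vol}_{\mathrm{S}}(\mathbb{D}_{2}(Z,\vec{r}))=2\mathcal{I}_1-2\mathcal{I}_3$, and bound the result by $32\cosh^{2}r\sinh^{4}r$. The one point requiring care — and the main obstacle — is that one cannot simply discard the negative term $-2\mathcal{I}_3$ for all $r$: doing so leaves a contribution of order $\sinh^{2}r\cosh^{2}r$, which behaves like $r^{2}$ as $r\to 0$ and is \emph{not} dominated by $32\cosh^{2}r\sinh^{4}r\sim 32r^{4}$. The low-order terms of $\mathcal{I}_1+\mathcal{I}_2$ are in fact cancelled by $2\mathcal{I}_3$, reflecting that the true volume vanishes to high order at $r=0$. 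I would therefore split into two regimes. For $\sinh r\ge 1$, discarding $-2\mathcal{I}_3$ is harmless and the inequality $\sinh^{2}r\ge 1$ lets me absorb the $\sinh^{2}r\cosh^{2}r$ term into $\sinh^{4}r\cosh^{2}r$, yielding $\mathrm{Vol}_{\mathrm{S}}(\mathbb{D}_{2}(Z,\vec{r}))\le 2\mathcal{I}_1\le C\cosh^{2}r\sinh^{4}r$ for a small absolute constant $C<1$. For $\sinh r\le 1$, I would instead return to the original integrand and use $|\cosh r_1-\cosh r_2|\le\sinh r\,|r_1-r_2|$ together with $\sinh r_i\le\sinh r$ to obtain $\mathrm{Vol}_{\mathrm{S}}(\mathbb{D}_{2}(Z,\vec{r}))\le\tfrac{r^{4}}{6}\sinh^{6}r$, which is $\le 32\cosh^{2}r\sinh^{4}r$ because $r<\sinh^{-1}(1)<1$ on this range. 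Together these two regimes give the claimed bound, with the constant $32$ left to spare.
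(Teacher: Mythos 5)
Your proposal follows the paper's computation essentially line for line: the same product-to-sum identity $2\sinh((r_1+r_2)/2)\sinh((r_1-r_2)/2)=\cosh r_1-\cosh r_2$, the same decomposition into $\mathcal{I}_1+\mathcal{I}_2-2\mathcal{I}_3$, and the same elementary antiderivatives (your value $\int_0^r\sinh^2t\cosh^2t\,\mathrm{d}t=\tfrac{1}{32}\sinh(4r)-\tfrac{r}{8}=\tfrac{1}{8}(2\sinh^3r\cosh r+\sinh r\cosh r-r)$ is correct; the coefficient $7\sinh^3r\cosh r$ appearing in \eqref{prop1-eqn3} over the denominator $48$ should read $6\sinh^3r\cosh r$). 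The genuine difference is your final step, and it addresses a real issue. The paper passes from the closed forms to the bound $32\cosh^2r\sinh^4r$ in one line, but, as you observe, discarding $-2\mathcal{I}_3$ leaves the contribution $\tfrac18\sinh^2r\cosh^2r\sim r^2/8$, which is \emph{not} dominated by $32\cosh^2r\sinh^4r\sim 32r^4$ as $r\to0$; near $r=0$ the inequality genuinely depends on the cancellation between $2\mathcal{I}_1$ and $2\mathcal{I}_3$ (the volume is in fact $O(r^{10})$ there). Your two-regime argument --- dropping $\mathcal{I}_3$ only when $\sinh r\ge1$, where $\sinh^2r\cosh^2r\le\sinh^4r\cosh^2r$ gives $2\mathcal{I}_1\le\tfrac38\sinh^4r\cosh^2r$, and for $\sinh r\le1$ bounding the integrand directly via $|\cosh r_1-\cosh r_2|\le\sinh(r)\,|r_1-r_2|$ to get $\mathrm{Vol}_{\mathrm{S}}(\mathbb{D}_2(Z,\vec{r}))\le\tfrac{r^4}{6}\sinh^6r\le\tfrac16\sinh^4r$ --- closes this gap cleanly, at the cost of a short case analysis. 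The proposition itself is true, and your write-up is the more complete proof of it.
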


\begin{prop}\label{prop2}
With notation as above, for any $r\in\mathbb{R}_{>0}$ with $\vec{r}=(r,\ldots,r)^{t}\in\mathbb{R}_{>0}^{g}$, we have the following estimate
\begin{align}\label{prop2-eqn}
\mathrm{Vol}_{\mathrm{S}}(\mathbb{D}_{g}(Z,\vec{r}))=O_{g}\big( (\cosh{r})^{g^2-2}(\sinh{r})^{g+2} \big),
\end{align}
where the implied constant depends only on $g$.
\begin{proof}
From equation \eqref{polar-metric-2}, we have
\begin{align*}
&\mathrm{Vol}_{\mathrm{S}}(\mathbb{D}_{g}(Z,\vec{r}))=\int_{\mathbb{D}_{g}(Z,\vec{r})}\mu_{\mathrm{S}}(Z)=\notag\\&2^{g}\int_{0}^{r}\cdots\int_{0}^{r}
\prod_{j=1}^{g}\sinh^{2}r_j\prod_{1\leq j<k\leq g}\sinh^{2}((r_{j}-r_{k})/2)\prod_{1\leq j<k\leq g}\sinh^{2}((r_{j}+r_{k})/2)\bigwedge_{j=1}^{g}\dd r_{j}.
 \end{align*}
For $1\leq j,k\leq g$, we have the estimate
\begin{align*}
\sinh^2((r_j + r_k)/2)\sinh^2((r_j - r_k)/2) & = \frac{1}{4}( \cosh{r_j} - \cosh{r_k})^2   \leq \frac{1}{4}( \cosh^2{r_j} + \cosh^2{r_k} ).
\end{align*}
So, we have
\begin{align*}
\mathrm{Vol}_{\mathrm{S}}(\mathbb{D}_{g}(z,\vec{r}))\leq  2^{g}\int_{0}^{r}\cdots\int_{0}^{r}\prod_{j = 1}^g \sinh^2{r_j}\prod_{1\leq j<k \leq g} ( \cosh^2{r_j} + \cosh^2{r_k} ) \bigwedge_{j = 1}^g \dd r_j.
\end{align*}
Observe that
\begin{align}\label{prop2-eqn1}
\mathrm{Vol}_{\mathrm{S}}(\mathbb{D}_{g}(z,\vec{r}))\leq\int_{\mathbb{D}_{g}(Z,\vec{r})}\mu_{\mathrm{S}}(Z)\leq 2^g\int_{0}^{r}\cdots   \int_{0}^{r}\prod_{j = 1}^g \sinh^2{(r_i)}\prod_{1\leq j<k \leq n} ( \cosh^2{r_j} + \cosh^2{r_k} ) =\notag \\\int_{0}^{r}\cdots   \int_{0}^{r}\bigg(\int_{0}^{r}\sinh^2{r_g}\prod_{j=l}^{g-1}(\cosh^2{r_g}+\cosh^2{r_j})\dd {r_g}\bigg) \times
\notag\\\bigg(\prod_{j = 1}^{g-1} \sinh^2{r_j}\prod_{1\leq j<k \leq g-1} ( \cosh^2{r_j} + \cosh^2{r_k} )\bigg)\bigwedge_{j = 1}^{g-1} \dd r_j.
\end{align}

We now proceed by induction. For any $y, x_1,\ldots,x_{m},\in\mathbb{R}_{>0}$ with $x=(x_1,\ldots x_m)$, we have
\begin{align*}
\prod_{j=1}^m (x_i + y) = \sum_{k=0}^m \bigg( \sum_{1\leq j_1 < \dots < j_{m-k} \leq m} x_{j_1} \dots x_{j_{m-k}} \bigg)y^k= \sum_{k=0}^n e_{m,m-k}(x) y^k\\[0.1cm]
\mathrm{where}\,\,e_{m,k}(x) = \sum_{1\leq j_1 < \dots < j_k \leq m} x_{j_1} \dots x_{j_k}.
\end{align*}
Set
\begin{align*}
 P_{g}(r_1,\ldots,r_{g}):=\prod_{j = 1}^g \sinh^2{r_j}\prod_{1\leq j<k \leq g} ( \cosh^2{r_j} + \cosh^2{r_k} ).
\end{align*}
From estimate \eqref{prop2-eqn1}, we have
\begin{align}\label{prop2-eqn2}
&\int_{\mathbb{D}_{g}(Z,\vec{r})}\mu_{\mathrm{S}}(Z)\leq 
 \int_{\mathbb{D}_{g}(Z,\vec{r})} P_g(r_1,\ldots,r_{g}) \bigwedge_{j = 1}^g \dd r_j =\notag\\[0.09cm]&
 \int_{\mathbb{D}_{g-1}(Z,\vec{r})} \bigg(\int_0^r \sinh^2{r_g}\prod_{j=1}^{g-1}(\cosh^2{r_g}+\cosh^2{r_j}) \dd r_g\bigg)P_{g-1}(r_{1},\ldots,r_{g-1})\bigwedge_{j = 1}^{g-1} \dd r_j .
\end{align}
Put $x=(\cosh^2{r_1},\ldots,\cosh^2{r_{g-1}})$. We now compute
\begin{align*}
\int_0^r \sinh^2{r_g}\prod_{j=l}^{g-1}(\cosh^2{r_g}+\cosh^2{r_j}) \dd r_g %
=\int_0^r \sinh^2{r_g}\sum_{k = 0}^{g-1}e_{g-1,g-1-k} (x)\cosh^{2k}{r_g} \dd r_g= \notag\\
\sum_{k = 0}^{g-1}e_{g-1,g-1-k}(x) \int_0^r \sinh^2{r_g}\cosh^{2k}{r_g} \dd r_g.
\end{align*}
From application of formula 2.4.12  \cite{grad}, we deduce the following estimate
\begin{align*}
2 \int_0^{r} \sinh^2{r_g}\cosh^{2k}{r_g} \dd r_{g} \leq \sinh^3{r} \cosh^{2k-1}{r} + \sinh{r}\cosh{r},
\end{align*}
using which, we arrive at the following estimate
\begin{align}\label{prop3-eqn3}
\int_0^r \sinh^2{r_g}\prod_{j=1}^{g-1}(\cosh^2{r_g}+\cosh^2{r_j}) \dd r_g \leq \notag\\ \sum_{k = 0}^{g-1}e_{g-1,g-1-k}(x) \big( \sinh^3{r} \cosh^{2k-1}{r} + \sinh{r}\cosh{r} \big).
\end{align}
For $(r_1,\ldots,r_{g-1})\in \mathcal{D}_{g-1}(Z,\vec{r})$, we have $0\leq r_1,\ldots,r_{g-1}\leq r$. From estimate \eqref{prop3-eqn3}, for $x=(\cosh^{2}{r_1},\ldots,\cosh^{2}{r_{g-1}})$,  we derive
\begin{align}\label{prop2-eqn4}
\sum_{k = 0}^{g-1} \binom{g-1}{g-1-k}(\cosh{r})^{2(g-1-k)} \big( \sinh^3{r} \cosh^{2k-1}{r} + \sinh{r}\cosh{r} \big) =\notag\\
\sinh^3{r}\cosh^{2g-3}{r}\bigg(\sum_{k = 0}^{g-1} \binom{g-1}{k}\bigg) + \sinh{r}\bigg( \sum_{k = 0}^{g-1} \binom{g-1}{k} \cosh^{2g-2k-1}{r}\bigg) =\notag\\
2^{g-1}\sinh^3{r}\cosh^{2g-3}{r} + 2^{g-1}\sinh{r}\cosh^{2g-1}{r} < C_{g} \sinh{r}\cosh^{2g-1}{r},
\end{align}
for some constant $C_{g}$, which only depends on $g$. 

Hence, combining estimates \eqref{prop2-eqn1}--\eqref{prop2-eqn4}, arrive at the following estimate
\begin{align}\label{prop2-eqn5}
\mathrm{Vol}_{\mathrm{S}}(\mathbb{D}_{g}(Z,\vec{r}))\leq\int_{\mathbb{D}_{g}(Z,\vec{r})}\mu_{\mathrm{S}}(Z)
\leq \int_{\mathbb{D}_{g}(Z,\vec{r})} P_g(r_1,\ldots r_{g}) \bigwedge_{j = 1}^g \dd r_j \leq\notag\\[0.1cm] C_g \sinh{r}\cosh^{2g-1}{r}\int_{\mathbb{D}_{g-1}(Z,\vec{r})} P_{g-1}(r_1,\ldots r_{g-1}) \bigwedge_{j = 1}^{g-1} \dd r_j.
\end{align}
We now assume that
\begin{align}\label{prop2-eqn6}
\int_{\mathbb{D}_{g-1}(Z,\vec{r})}\mu_{\mathrm{S}}(Z) \leq C_{g-1}(\cosh{r})^{(g-1)^2-2}(\sinh{r})^{g+1}.
\end{align}

From estimate \eqref{prop1-eqn}, clearly induction hypothesis is satisfied for $g=2$. Combining estimates \eqref{prop2-eqn5} and \eqref{prop2-eqn6}, we arrive at the following estimate
\begin{align}\label{prop2-eqn7}
\mathrm{Vol}_{\mathrm{S}}(\mathbb{D}_{g}(Z,\vec{r}))=\int_{\mathbb{D}_{g}(Z,\vec{r})}\mu_{\mathrm{S}}(Z)\leq C_g (\cosh{r})^{g^2-2}(\sinh{r})^{g+2},
\end{align}
which completes the proof of the proposition. 
\end{proof}
\end{prop}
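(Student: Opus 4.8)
The plan is to argue by induction on the genus $g$, taking Proposition \ref{prop1} as the base case $g=2$. First I would start from the polar-coordinate expression \eqref{polar-metric-2} for the volume, so that everything reduces to estimating an iterated integral over $[0,r]^{g}$ of the weight $\prod_{j}\sinh^{2}r_{j}$ times the two cross-products $\prod_{j<k}\sinh^{2}((r_j\pm r_k)/2)$. The key algebraic simplification is the product-to-difference identity $2\sinh((r_j+r_k)/2)\sinh((r_j-r_k)/2)=\cosh r_j-\cosh r_k$, already used in the proof of Proposition \ref{prop1}; squaring it and discarding the cross term via $(\cosh r_j-\cosh r_k)^{2}\le \cosh^{2}r_j+\cosh^{2}r_k$ decouples the paired factors into $\tfrac14\prod_{j<k}(\cosh^{2}r_j+\cosh^{2}r_k)$. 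This replaces the genuinely coupled Siegel weight by a symmetric polynomial in the single-variable quantities $\cosh^{2}r_j$, which is what makes an inductive peeling feasible.

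Next I would separate out the innermost integration variable $r_g$. Collecting all factors involving $r_g$ gives $\sinh^{2}r_g\prod_{j<g}(\cosh^{2}r_g+\cosh^{2}r_j)$, and expanding the product in the elementary symmetric polynomials $e_{g-1,m}(\cosh^{2}r_1,\dots,\cosh^{2}r_{g-1})$ reduces the $r_g$-integral to a finite combination of integrals $\int_0^r\sinh^{2}r_g\cosh^{2k}r_g\,\dd r_g$. Each of these is controlled by the reduction formula (Gradshteyn--Ryzhik 2.4.12), giving a bound of the shape $\sinh^{3}r\cosh^{2k-1}r+\sinh r\cosh r$. Bounding every $\cosh r_j\le\cosh r$ turns the symmetric polynomials into binomial coefficients, and the two binomial sums each collapse to $2^{g-1}$, so the whole $r_g$-integral is dominated by a single factor $C_g\,\sinh r\,\cosh^{2g-1}r$ that pulls out of the remaining $(g-1)$-fold integral. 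What is left is exactly the integral of $P_{g-1}$ over $\mathbb{D}_{g-1}(Z,\vec r)$, to which the induction hypothesis applies.

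Finally I would close the induction by checking the exponent arithmetic: the extracted factor contributes $\cosh^{2g-1}r\,\sinh r$, and the inductive bound contributes $(\cosh r)^{(g-1)^2-2}(\sinh r)^{g+1}$; the $\cosh$-exponents add to $(2g-1)+((g-1)^2-2)=g^2-2$ and the $\sinh$-exponents add to $1+(g+1)=g+2$, which is precisely \eqref{prop2-eqn}. The step I expect to require the most care is the peeling of the $r_g$-integral: one must verify that, after the crude bound $\cosh r_j\le\cosh r$, the dominant contribution over all the symmetric-polynomial terms really carries the exponent $2g-1$ in $\cosh r$ and only one power of $\sinh r$, since it is this bookkeeping that drives the induction and produces the final exponents $g^2-2$ and $g+2$. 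The remaining inputs---the product-to-difference identity, the Gradshteyn--Ryzhik bound, and the base case---are routine.
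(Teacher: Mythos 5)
Your proposal follows the paper's proof essentially verbatim: the same polar-coordinate starting point, the same product-to-difference identity and crude bound $(\cosh r_j-\cosh r_k)^2\le\cosh^2 r_j+\cosh^2 r_k$, the same peeling of the $r_g$-integral via elementary symmetric polynomials and the Gradshteyn--Ryzhik reduction, the same collapse of the binomial sums to a factor $C_g\sinh r\cosh^{2g-1}r$, and the same induction with Proposition \ref{prop1} as base case and the exponent bookkeeping $(2g-1)+((g-1)^2-2)=g^2-2$, $1+(g+1)=g+2$. The argument is correct and there is nothing to add.
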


We now prove a Jorgenson-Lundelius type estimate, which is an extension of Lemma 3 and 4 to the setting of Siegel modular varieties from \cite{jl}. 

For any $Z,W\in\mathbb{H}_{g}$, when $\Gamma$ is cocompact, we define
\begin{align*}
\mathcal{N}_\Gamma(Z, W; \rho) := \# \big\lbrace\gamma \in \Gamma\backslash \lbrace\mathrm{Id}_{g}\rbrace  \big|\  d_{\mathrm{S}}(Z, \gamma W) < \rho \big\rbrace;
\end{align*}
when $\Gamma$ be is an arithmetic subgroup, we define
\begin{align*}
\mathcal{N}_\Gamma(Z, W; \rho) := \# \big\lbrace\gamma \in \Gamma\backslash \Gamma_{\infty}  \big|\  d_{\mathrm{S}}(Z, \gamma W) < \rho \big\rbrace.
\end{align*}

Furthermore, for brevity of notation, here after, for any $r\in\mathbb{R}_{>0}$ we use the following notation
\begin{align}
\mathbb{D}_{g}(Z,r):=\mathbb{D}_{g}(Z,\vec{r}),\,\,\mathrm{where}\,\,\vec{r}:=(r,\ldots, r)^{t}\in\mathbb{R}_{>}^{g}.
\end{align}
The following lemma is an extension of Lemma 3 from \cite{jl} to the setting of Siegel modular varieties. 

\vspace{0.2cm}
\begin{lem}\label{lem3}
With the notation as above, let $\Gamma\subset\mathrm{Sp}(2g,\mathbb{R})$ be a cocompact subgroup or an arithmetic subgroup with $r_{\Gamma}$ is as defined in equations  \eqref{irad-1} or \eqref{irad-2}, respectively. Then, for  $\rho_{0}\in\mathbb{R}_{>0}$ with $\rho_{0}>r_{\Gamma}$, we have
\begin{align}\label{lem3-eqn}
\mathcal{N}_\Gamma(Z,W; \rho) \leq \mathcal{N}_\Gamma(Z, W; \rho_{0}) +
\frac{\mathrm{Vol}_{\mathrm{S}}(\mathbb{D}_{g}(Z,(\rho + r_{\Gamma})/2\sqrt{2}))-\mathrm{Vol}_{\mathrm{S}}(\mathbb{B}_{g}(Z,\rho_{0}-r_{\Gamma}))}{\mathrm{Vol}_{\mathrm{S}}(\mathbb{B}_{g}(Z,r_{\Gamma}))}.
\end{align}
\end{lem}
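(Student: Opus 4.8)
The plan is to carry out the standard lattice-point packing argument of Jorgenson--Lundelius, with the Siegel-specific input supplied by the volume bound of Proposition~\ref{prop2} and by comparing geodesic balls with the polydiscs $\mathbb{D}_{g}$.

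First I would write $\mathcal{N}_\Gamma(Z,W;\rho)=\mathcal{N}_\Gamma(Z,W;\rho_{0})+M$, where $M$ counts those $\gamma$ (in $\Gamma\backslash\{\mathrm{Id}\}$, resp. $\Gamma\backslash\Gamma_{\infty}$) with $\rho_{0}\leq d_{\mathrm{S}}(Z,\gamma W)<\rho$; it then suffices to bound $M$. The geometric heart of the argument is that the open balls $\mathbb{B}_{g}(\gamma W,r_{\Gamma})$ are pairwise disjoint as $\gamma$ runs over the relevant distinct elements: since the $\mathrm{Sp}(2g,\mathbb{R})$-action is isometric, $d_{\mathrm{S}}(\gamma_{1}W,\gamma_{2}W)=d_{\mathrm{S}}(W,\gamma_{1}^{-1}\gamma_{2}W)\geq 2r_{\Gamma}$ by the definition of $r_{\Gamma}$ in \eqref{irad-1} (resp. \eqref{irad-2}), so two balls of radius $r_{\Gamma}$ about distinct centers cannot meet. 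Transitivity and invariance of the Siegel metric also give that each such ball has the common volume $\mathrm{Vol}_{\mathrm{S}}(\mathbb{B}_{g}(Z,r_{\Gamma}))$.

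Next I would localize these balls in an annulus. By the triangle inequality, if $\rho_{0}\leq d_{\mathrm{S}}(Z,\gamma W)<\rho$ then every $P\in\mathbb{B}_{g}(\gamma W,r_{\Gamma})$ satisfies $\rho_{0}-r_{\Gamma}<d_{\mathrm{S}}(Z,P)<\rho+r_{\Gamma}$, where the hypothesis $\rho_{0}>r_{\Gamma}$ ensures the inner radius is positive. Hence the $M$ disjoint balls all lie in $\mathbb{B}_{g}(Z,\rho+r_{\Gamma})\setminus\overline{\mathbb{B}_{g}(Z,\rho_{0}-r_{\Gamma})}$, and comparing volumes yields $M\cdot\mathrm{Vol}_{\mathrm{S}}(\mathbb{B}_{g}(Z,r_{\Gamma}))\leq \mathrm{Vol}_{\mathrm{S}}(\mathbb{B}_{g}(Z,\rho+r_{\Gamma}))-\mathrm{Vol}_{\mathrm{S}}(\mathbb{B}_{g}(Z,\rho_{0}-r_{\Gamma}))$. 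The final link to Proposition~\ref{prop2} is the containment $\mathbb{B}_{g}(Z,\rho+r_{\Gamma})\subseteq\mathbb{D}_{g}(Z,(\rho+r_{\Gamma})/2\sqrt{2})$, which follows from the distance formula \eqref{bkseries-1}: since $d_{\mathrm{S}}(Z,W)^{2}=8\sum_{j}r_{j}(Z,W)^{2}$, the inequality $d_{\mathrm{S}}(Z,W)<\rho+r_{\Gamma}$ forces $|r_{j}(Z,W)|<(\rho+r_{\Gamma})/2\sqrt{2}$ for every $j$. Replacing the outer geodesic-ball volume by the larger polydisc volume (the subtracted inner ball is left untouched, which only strengthens the bound) and dividing through by $\mathrm{Vol}_{\mathrm{S}}(\mathbb{B}_{g}(Z,r_{\Gamma}))$ then gives \eqref{lem3-eqn}.

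I expect the main obstacle to be the disjointness step in the arithmetic case. There the count is taken over $\gamma\in\Gamma\backslash\Gamma_{\infty}$, so for distinct such $\gamma_{1},\gamma_{2}$ the product $\gamma_{1}^{-1}\gamma_{2}$ need not avoid $\Gamma_{\infty}$, and one must argue — restricting $W$ to the fundamental domain $\mathcal{F}_{\Gamma}$ as in \eqref{irad-2} and using the explicit structure of $\Gamma_{\infty}$ in \eqref{gamma-infty0}--\eqref{gamma-inftyj} — that the relevant centers $\gamma W$ remain separated by at least $2r_{\Gamma}$. Once this separation is secured, the remaining volume comparison is routine.
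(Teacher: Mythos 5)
Your proposal is correct and follows essentially the same route as the paper: the paper's own proof simply asserts the packing inequality ``from the definition'' and then records the containment $\mathbb{B}_{g}(Z,\rho+r_{\Gamma})\subset\mathbb{D}_{g}(Z,(\rho+r_{\Gamma})/2\sqrt{2})$, which is precisely the disjoint-ball/annulus argument you spell out in full. Your closing caveat about disjointness in the arithmetic case (where $\gamma_{1}^{-1}\gamma_{2}$ may land in $\Gamma_{\infty}$ even though $\gamma_{1},\gamma_{2}\notin\Gamma_{\infty}$) flags a genuine subtlety that the paper's two-line proof does not address, so your account is if anything more careful than the original.
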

\begin{proof}
From the definition of $\mathcal{N}_\Gamma(Z, W; \rho)$, it follows that
\begin{align*}
\mathcal{N}_\Gamma(Z,W; \rho) \leq \mathcal{N}_\Gamma(Z, W; \rho_{0}) +
\frac{\mathrm{Vol}_{\mathrm{S}}(\mathbb{B}_{g}(Z,\rho + r_{\Gamma}))-\mathrm{Vol}_{\mathrm{S}}(\mathbb{B}_{g}(Z, \rho_{0}-r_{\Gamma}))}{\mathrm{Vol}_{\mathrm{S}}(\mathbb{B}_{g}(Z, r_{\Gamma}))}.
\end{align*}

The proof of the lemma follows from the observation 
\begin{align*}
\mathrm{Vol}_{\mathrm{S}}(\mathbb{B}_{g}(Z, \rho + r_{\Gamma}))\subset\mathrm{Vol}_{\mathrm{S}}(\mathbb{D}_{g}(Z, (\rho + r_{\Gamma})/2\sqrt{2})).
\end{align*}
\end{proof}

\begin{thm}\label{thm4}
Let $f$ be a smooth, positive, and  monotonically decreasing function on $\RR_{>0}$. Then, for $\Gamma$ cocompact with $r_{\Gamma}$ as defined in equation \eqref{irad-1}, and for 
$\rho_{0}\in\mathbb{R}_{>0}$ with $\rho_{0}>r_{\Gamma}$, we have the following estimate
\begin{align}\label{thm4-eqn-1}
&\sum_{\gamma\in \Gamma\backslash\lbrace\mathrm{Id}\rbrace}f(d_{\mathrm{S}}(Z,\gamma W))
\leq  \int_0^{\rho_{0}} f(\rho) \dd N_\Gamma(Z, W; \rho) +f(\rho_{0}) \frac{\mathrm{Vol}_{\mathrm{S}}(\mathbb{D}_{g}(Z,\rho_{0}/2\sqrt{2}))}{\mathrm{Vol}_{\mathrm{S}}(\mathbb{B}_{g}(Z,r_{\Gamma}))} +\notag\\ & \frac{C_{g}}{\mathrm{Vol}_{\mathrm{S}}(\mathbb{B}_{g}(Z,r_{\Gamma}))} \int_{\rho_{0}}^\infty f(\rho)(\cosh((\rho+r_{\Gamma})/2\sqrt{2}))^{g^2-1}(\sinh((\rho+r_{\Gamma})/2\sqrt2))^{g+1} \dd\rho.
\end{align}
Then, for $\Gamma$ an arithmetic subgroup with $r_{\Gamma}$ as defined in equation \eqref{irad-2}, and for $\rho_{0}\in\mathbb{R}_{>0}$ with $\rho_{0}>r_{\Gamma}$, we have the following estimate
\begin{align}\label{thm4-eqn-2}
&\sum_{\gamma\in \Gamma\backslash\Gamma_{\infty}}f(d_{\mathrm{S}}(Z,\gamma W)) \leq 
 \int_0^{\rho_{0}} f(\rho) \dd N_\Gamma(Z, W; \rho) +f(\rho_{0}) \frac{\mathrm{Vol}_{\mathrm{S}}(\mathbb{D}_{g}(Z,\rho_{0}/2\sqrt{2}))}{\mathrm{Vol}_{\mathrm{S}}(\mathbb{B}_{g}(Z,r_{\Gamma}))} + \notag\\ &\frac{C_{g}}{\mathrm{Vol}_{\mathrm{S}}(\mathbb{B}_{g}(Z,r_{\Gamma}))} \int_{\rho_{0}}^\infty f(\rho)(\cosh((\rho+r_{\Gamma})/2\sqrt{2}))^{g^2-1}(\sinh((\rho+r_{\Gamma})/2\sqrt{2}))^{g+1} \dd\rho,
\end{align}
and $C_{g}$ in both the above estimates is a constant, which only depends on $g$.
\end{thm}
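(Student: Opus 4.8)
The plan is to convert the sum over group elements into a Stieltjes integral against the counting function $\mathcal{N}_\Gamma(Z,W;\rho)$, split the integral at the threshold $\rho_0$, and control the tail using the explicit volume bound of Proposition~\ref{prop2} fed through Lemma~\ref{lem3}.

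I present the argument, and because the cocompact and arithmetic cases differ only in whether $\Gamma\backslash\{\mathrm{Id}\}$ or $\Gamma\backslash\Gamma_\infty$ indexes the sum (and correspondingly in which definition of $r_\Gamma$ and which counting function are used), I treat them uniformly; the reader may substitute the appropriate objects throughout.

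\begin{proof}
Since $f$ is monotonically decreasing and positive, and $d_{\mathrm{S}}(Z,\gamma W)$ ranges over the discrete set counted by $\mathcal{N}_\Gamma(Z,W;\rho)$, we may write the sum as a Riemann--Stieltjes integral,
\begin{align*}
\sum_{\gamma}f(d_{\mathrm{S}}(Z,\gamma W))=\int_{0}^{\infty}f(\rho)\dd \mathcal{N}_\Gamma(Z,W;\rho),
\end{align*}
where the sum runs over $\Gamma\backslash\{\mathrm{Id}\}$ in the cocompact case and over $\Gamma\backslash\Gamma_\infty$ in the arithmetic case. We split this integral at $\rho_0$ into $\int_0^{\rho_0}$ and $\int_{\rho_0}^{\infty}$. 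The first piece is retained verbatim as the term $\int_0^{\rho_0}f(\rho)\dd \mathcal{N}_\Gamma(Z,W;\rho)$ appearing in \eqref{thm4-eqn-1} and \eqref{thm4-eqn-2}; the work lies in bounding the tail $\int_{\rho_0}^{\infty}f(\rho)\dd \mathcal{N}_\Gamma(Z,W;\rho)$.

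For the tail, I would integrate by parts. Since $f$ is decreasing, $\dd f(\rho)\leq 0$, and one obtains
\begin{align*}
\int_{\rho_0}^{\infty}f(\rho)\dd \mathcal{N}_\Gamma(Z,W;\rho)\leq f(\rho_0)\,\mathcal{N}_\Gamma(Z,W;\rho_0)^{\mathrm{c}}-\int_{\rho_0}^{\infty}\mathcal{N}_\Gamma(Z,W;\rho)\dd f(\rho),
\end{align*}
after which I substitute the bound from Lemma~\ref{lem3} for $\mathcal{N}_\Gamma(Z,W;\rho)$. The dominant contribution from Lemma~\ref{lem3} is the term $\mathrm{Vol}_{\mathrm{S}}(\mathbb{D}_{g}(Z,(\rho+r_\Gamma)/2\sqrt2))/\mathrm{Vol}_{\mathrm{S}}(\mathbb{B}_{g}(Z,r_\Gamma))$, and here I invoke Proposition~\ref{prop2} to replace the numerator volume by $O_g\big((\cosh((\rho+r_\Gamma)/2\sqrt2))^{g^2-2}(\sinh((\rho+r_\Gamma)/2\sqrt2))^{g+2}\big)$. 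Undoing the integration by parts (equivalently, differentiating the volume bound in $\rho$, which raises the cosh exponent by one and lowers the sinh exponent by one) converts $-\int \mathrm{Vol}_{\mathrm{S}}\dd f$ back into an integral of $f$ against the density $(\cosh(\cdots))^{g^2-1}(\sinh(\cdots))^{g+1}$, producing the final tail term in the theorem. The terms $f(\rho_0)\,\mathrm{Vol}_{\mathrm{S}}(\mathbb{D}_{g}(Z,\rho_0/2\sqrt2))/\mathrm{Vol}_{\mathrm{S}}(\mathbb{B}_{g}(Z,r_\Gamma))$ collect the boundary contribution at $\rho_0$ together with the remaining pieces of the Lemma~\ref{lem3} estimate.

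The \emph{main obstacle} is the bookkeeping in the integration by parts: one must verify that $f(\rho)\,\mathcal{N}_\Gamma(Z,W;\rho)\to 0$ as $\rho\to\infty$ (so the boundary term at infinity vanishes), which requires that $f$ decays faster than the polynomial-in-$\cosh$ growth of the counting function, and one must track the exponent shift (from $g^2-2$ to $g^2-1$ in the cosh power and from $g+2$ to $g+1$ in the sinh power) that arises when the volume bound is differentiated and reinserted. For the arithmetic case an additional subtlety is that $\Gamma_\infty$ is quotiented out, but since Lemma~\ref{lem3} already accounts for the correct $r_\Gamma$ and counting function in that setting, the same computation goes through verbatim.
\end{proof}
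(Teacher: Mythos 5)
Your proposal is correct and follows essentially the same route as the paper: the sum is written as a Stieltjes integral against $\mathcal{N}_\Gamma(Z,W;\rho)$, split at $\rho_0$, and the tail is controlled by summation by parts via Lemma \ref{lem3}, with the density $(\cosh(\cdot))^{g^2-1}(\sinh(\cdot))^{g+1}$ obtained exactly as you describe, by differentiating the volume bound of Proposition \ref{prop2} (the paper absorbs the other term of the derivative using $\sinh\leq\cosh$). The technical points you flag --- vanishing of the boundary term at infinity and the exponent shift --- are handled implicitly in the paper in the same way.
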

\begin{proof}
We prove the theorem for the case when $\Gamma$ is cocompact, i.e., the proof for estimate \eqref{thm4-eqn-1}. The proof for estimate \eqref{thm4-eqn-2} is analogous. 

For $\rho_{0}>r_{\Gamma}$, from Lemma \ref{lem3}, we have
\begin{align}\label{thm4-eqn1}
\sum_{\gamma\in \Gamma\backslash\Gamma_{\infty}}f(d_{\mathrm{S}}(Z,\gamma W))=\int_{0}^{\infty}f(\rho)\dd N_\Gamma(Z, W; \rho)\leq 
 \int_0^{\rho_{0}} f(\rho) \dd N_\Gamma(Z, W; \rho) +\notag\\ f(\rho_{0}) \frac{\mathrm{Vol}_{\mathrm{S}}(\mathbb{D}_{g}(Z,\rho_{0}/2\sqrt{2}))}{\mathrm{Vol}_{\mathrm{S}}(\mathbb{B}_{g}(Z,r_{\Gamma}))} + \frac{1}{\mathrm{Vol}_{\mathrm{S}}(\mathbb{B}_{g}(Z,r_{\Gamma}))} \int_{\rho_{0}}^\infty f(\rho)\dd \mathrm{Vol}_{\mathrm{S}}(\mathbb{D}_{g}(Z,r_{\Gamma})).
\end{align}
For any $r\in\mathbb{R}_{>0}$, from estimate \eqref{prop2-eqn6}, from the proof of Proposition \ref{prop2}, we have 
\begin{align*}
\dd\mathrm{Vol}_{\mathrm{S}}(\mathbb{D}_{g}(Z,r))=C_{g} \dd ((\cosh{r})^{g^2-2}(\sinh{r})^{g+2}) =C_{g}(g^2-2)(\cosh{r})^{g^2-3}(\sinh{r})^{g+3}\dd r +
 \\C_{g} (g+2)(\cosh{r})^{g^2-1}(\sinh{r})^{g+1}\dd r\leq C_g (\cosh{r})^{g^2-1}(\sinh{r})^{g+1} \dd r.
\end{align*}

So, we can conclude that
\begin{align}\label{thm4-eqn2}
\dd \mathrm{Vol}_{\mathrm{S}}(\mathbb{D}_{g}(Z,(\rho+ r_{\Gamma})/2\sqrt{2}))\leq C_g (\cosh((\rho+r_{\Gamma})/2\sqrt{2}))^{g^2-1}(\sinh((\rho+r_{\Gamma})/2\sqrt{2})^{g+1} \dd r.
\end{align}
Combining estimates \eqref{thm4-eqn1} and \eqref{thm4-eqn2}, completes the proof of the Theorem.
\end{proof}

\vspace{0.2cm}
\subsection{Proofs of Theorem \ref{mainthm1} and Theorem \ref{mainthm2}}\label{sec-3.2}
Using all the notation and results, which are set up in the previous sections, we now prove Theorem \ref{mainthm1} and Theorem \ref{mainthm2}.  

For the rest of the section, we use the following notation. Let $f,\,h:\mathbb{H}_{g}\longrightarrow \mathbb{R}$ be two real valued functions. We denote 
\begin{align}
f\ll_{g} h,
\end{align}
 if there exists a constant $C_{g}$, which only depends on genus $g$ of $\mathbb{H}_{g}$, such that $f(Z)\leq C_{g} h(Z)$, for all $Z\in\mathbb{H}_{g}$, and $C_{g}$ is called the implied constant 

\vspace{0.2cm}
\begin{proof}[{\bf{Proof of Theorem \ref{mainthm1}}}]
With hypothesis as above, from estimate \eqref{bk-est2}, we have 
\begin{align}\label{thm1-eqn1}
\|\mathcal{B}_{X_{\Gamma}}^{\ell^{k}}(Z,W)\|_{\ell^k}\leq \sum_{\gamma\in\Gamma}\prod_{j=1}^{g}\frac{C_{g(k+1)}}{\cosh^{k(g+1)}(r_j(Z,\gamma W))}.
\end{align}
We now estimate the infinite series on the right hand-side of the above estimate. Given a $Z,W\in X_{\Gamma}$, we work with a fixed Dirichlet fundamental domain $\mathcal{F}_{\Gamma,Z}$ centered at $Z$, which is as described in equation \eqref{dir}. 

For $(x_1,\ldots x_g)^{t}\in\mathbb{R}_{>0}^{g}$ with $\sum_{j = 1}^g x_j^2 = x^2$, we have the elementary inequality
\begin{align*}
\cosh{x} \leq \prod_{j=1}^{g}\cosh{x_j} \leq \cosh^g(x/\sqrt{g}).
\end{align*}
Using which, for any $Z,W\in \mathcal{F}_ {\Gamma,Z}$, and $\gamma\in\Gamma$, we have the following estimate
\begin{align*}
\prod\limits_{j = 1}^g\frac{1}{\cosh^{k(g+1)}(r_j(Z, \gamma W))} 
\leq \frac{1}{\cosh^{k(g+1)}\big(\sum\limits_{j=1}^{k}r^2_j(Z, \gamma W)\big)^{1/2}}
\leq \frac{1}{\cosh^{k(g+1)}(d_{\mathrm{S}}(Z, \gamma W)/2 \sqrt{2})},
\end{align*}
which leads us to the following estimate 
\begin{align}\label{thm1-eqn2}
\sum_{\gamma\in\Gamma}\prod\limits_{j = 1}^g\frac{1}{\cosh^{k(g+1)}(r_j(Z, \gamma W))}  \leq \sum_{\gamma \in \Gamma}\frac{1}{\cosh^{k(g+1)}(d_{\mathrm{S}}(Z, \gamma W)/2 \sqrt{2})}.
\end{align}

Recall that the injectivity radius $r_{\Gamma}$ is as defined in equation \eqref{irad-1}. We now divide the proof into two cases. 

{\bf{Case 1: $\rho_{0}=d_{\mathrm{S}}(Z,W)>3r_{\Gamma}/2$.}}

From estimate \eqref{thm4-eqn-1}, we have
\begin{align}\label{thm1-eqn3}
&\sum_{\gamma \in \Gamma}\frac{1}{\cosh^{k(g+1)}(d_{\mathrm{S}}(Z, \gamma W)/2 \sqrt{2})}\ll_{g}\int_{0}^{\infty}\frac{\dd\mathcal{N}_{\Gamma}(Z,W;\rho)}{\cosh^{k(g+1)}(\rho/2 \sqrt{2})}\ll_{g} \notag\\[0.1cm]&\int_0^{\rho_{0}} \frac{\dd N_\Gamma(Z, W; \rho) }{\cosh^{k(g+1)}(\rho/2 \sqrt{2})}+ \frac{\mathrm{Vol}_{\mathrm{S}}(\mathbb{D}_{g}(Z,\rho_{0}/2\sqrt{2}))}{\mathrm{Vol}_{\mathrm{S}}(\mathbb{B}_{g}(Z,r_{\Gamma}))\cdot \cosh^{k(g+1)}(\rho_{0}/2 \sqrt{2})} +\notag\\[0.1cm]& \frac{1}{\mathrm{Vol}_{\mathrm{S}}(\mathbb{B}_{g}(Z,r_{\Gamma}))} \int_{\rho_{0}}^\infty\frac{(\cosh((\rho+r_{\Gamma})/2\sqrt{2}))^{g^2-1}(\sinh((\rho+r_{\Gamma})/2\sqrt{2}))^{g+1} \dd\rho}{\cosh^{k(g+1)}(\rho/2 \sqrt{2})},
\end{align}
where the implied constant depends only on $g$.

We now estimate each of the three terms on the right hand-side of the inequality in estimate \eqref{thm1-eqn3}. 

For the first term, from the choice of the fundamental domain $\mathcal{F}_{\Gamma}$, we have the following estimate
\begin{align}\label{thm1-eqn4}
\int_0^{\rho_{0}} \frac{\dd N_\Gamma(Z, W; \rho) }{\cosh^{k(g+1)}(\rho/2 \sqrt{2})}=\frac{1}{\cosh^{k(g+1)}(\rho_{0}/2 \sqrt{2})}=\frac{1}{\cosh^{k(g+1)}(d_{\mathrm{S}}(Z,W)/2 \sqrt{2})}.
\end{align} 
For the second term, using estimate \eqref{prop2-eqn}, we derive
\begin{align}\label{thm1-eqn5}
\frac{\mathrm{Vol}_{\mathrm{S}}(\mathbb{D}_{g}(Z,\rho_{0}/2\sqrt{2}))}{\mathrm{Vol}_{\mathrm{S}}(\mathbb{B}_{g}(Z,r_{\Gamma}))\cdot \cosh^{k(g+1)}(\rho_{0}/2 \sqrt{2})}\ll_{g} \frac{(\cosh (\rho_{0}/2\sqrt{2}))^{g^2-2}(\sinh (\rho_0/2\sqrt{2}))^{g+2}}{\mathrm{Vol}_{\mathrm{S}}(\mathbb{B}_{g}(Z,r_{\Gamma}))\cdot \cosh^{k(g+1)}(\rho_{0}/2\sqrt{2})}\ll_{g} \notag\\[0.1cm] 
\frac{1}{\mathrm{Vol}_{\mathrm{S}}(\mathbb{B}_{g}(Z,r_{\Gamma}))\cdot \cosh^{k(g+1)-g^{2}-g}(d_{\mathrm{S}}(Z,W)/2\sqrt{2})},
\end{align}
where the implied constant depends only on $g$.

For the third term, we compute
\begin{align}\label{thm1-eqn6}
&\frac{1}{\mathrm{Vol}_{\mathrm{S}}(\mathbb{B}_{g}(Z,r_{\Gamma}))} \int_{\rho_{0}}^\infty\frac{(\cosh((\rho+r_{\Gamma})/2\sqrt{2}))^{g^2-1}(\sinh((\rho+r_{\Gamma})/2\sqrt{2}))^{g+1} \dd\rho}{\cosh^{k(g+1)}(\rho/2 \sqrt{2})}\ll_{g}\notag\\[0.1cm]&\frac{1}{\mathrm{Vol}_{\mathrm{S}}(\mathbb{B}_{g}(Z,r_{\Gamma}))} \int_{\rho_{0}}^\infty\frac{\sinh((\rho+r_{\Gamma})/2\sqrt{2}) \dd\rho}{\cosh^{k(g+1)-g^{2}-g+1}(\rho/2 \sqrt{2})}=\notag\\[0.1cm]&\frac{1}{(k(g+1)-g^{2}-g+2)\mathrm{Vol}_{\mathrm{S}}(\mathbb{B}_{g}(Z,r_{\Gamma}))}\cdot\frac{1}{\cosh^{k(g+1)-g^{2}-g+2}(d_{\mathrm{S}}(Z,W)/2\sqrt{2})},
\end{align}
where the implied constant depends only on $g$.

Combining estimates \eqref{thm1-eqn1}--\eqref{thm1-eqn6}, we arrive at the following estimate
\begin{align*}
\|\mathcal{B}_{X_{\Gamma}}^{\ell^{k}}(Z,W)\|_{\ell^k}=O_{X_{\Gamma}}\bigg( \frac{C_{g(k+1)}}{\cosh^{k(g+1)-g^{2}-g}(d_{\mathrm{S}}(Z,W)/2\sqrt{2})}\bigg).
\end{align*}
where the implied constant depends only on $X_{\Gamma}$, which completes the proof of estimate \eqref{mainthm1-eqn}, for the case $d_{\mathrm{S}}(Z,W)>3r_{\Gamma}/2$.

{\bf{Case 2: $\rho_{0}=d_{\mathrm{S}}(Z,W)<3r_{\Gamma}/2$.}}

From estimate \eqref{thm4-eqn-1}, we have
\begin{align}\label{thm1-eqn7}
&\sum_{\gamma \in \Gamma}\frac{1}{\cosh^{k(g+1)}(d_{\mathrm{S}}(Z, \gamma W)/2 \sqrt{2})}\ll_{g} \int_{0}^{\infty}\frac{\dd\mathcal{N}_{\Gamma}(Z,W;\rho)}{\cosh^{k(g+1)}(\rho/2 \sqrt{2})}\ll_{g}\int_0^{\rho_{0}} \frac{\dd N_\Gamma(Z, W; \rho) }{\cosh^{k(g+1)}(\rho/2 \sqrt{2})}+  \notag\\[0.1cm]&\int_{\rho_{0}}^{3r_{\Gamma/2}}\frac{\dd N_\Gamma(Z, W; \rho) }{\cosh^{k(g+1)}(\rho/2 \sqrt{2})}+\frac{\mathrm{Vol}_{\mathrm{S}}(\mathbb{D}_{g}(Z,3r_{\Gamma}/4\sqrt{2}))}{\mathrm{Vol}_{\mathrm{S}}(\mathbb{B}_{g}(Z,r_{\Gamma}))\cdot \cosh^{k(g+1)}(3r_{\Gamma}/4 \sqrt{2})} +\notag\\[0.1cm]& \frac{1}{\mathrm{Vol}_{\mathrm{S}}(\mathbb{B}_{g}(Z,r_{\Gamma}))} \int_{3r_{\Gamma}/2}^\infty\frac{(\cosh((\rho+r_{\Gamma})/2\sqrt{2}))^{g^2-1}(\sinh((\rho+r_{\Gamma})/2\sqrt{2}))^{g+1} \dd\rho}{\cosh^{k(g+1)}(\rho/2 \sqrt{2})},
\end{align}

where the implied constant depends only on $g$.

We now estimate each of the four terms on the right hand-side of the inequality in estimate \eqref{thm1-eqn7}. 

For the first term, from the choice of the fundamental domain $\mathcal{F}_{\Gamma}$, we have the following estimate
\begin{align}\label{thm1-eqn8}
\int_0^{\rho_{0}} \frac{\dd N_\Gamma(Z, W; \rho) }{\cosh^{k(g+1)}(\rho/2 \sqrt{2})}=\frac{1}{\cosh^{k(g+1)}(\rho_{0}/2 \sqrt{2})}=\frac{1}{\cosh^{k(g+1)}(d_{\mathrm{S}}(Z,W)/2 \sqrt{2})}.
\end{align} 
For the second term, using estimate \eqref{prop2-eqn}, we derive
\begin{align}\label{thm1-eqn9}
\int_{\rho_{0}}^{3r_{\Gamma/2}}\frac{\dd N_\Gamma(Z, W; \rho) }{\cosh^{k(g+1)}(\rho/2 \sqrt{2})}\leq\frac{\mathrm{Vol}_{\mathrm{S}}(\mathbb{D}_{g}(Z,3r_{\Gamma}/2))}{\mathrm{Vol}_{\mathrm{S}}(\mathbb{B}_{g}(Z,r_{\Gamma}))}\cdot\frac{1}{\cosh^{k(g+1)}(d_{\mathrm{S}}(Z,W)/2\sqrt{2})}\ll_{g}\notag\\[0.1cm]\frac{(\cosh (3r_{\Gamma}/4\sqrt{2}))^{g^2-2}(\sinh (3r_{\Gamma}/4\sqrt{2}))^{g+2}}{\mathrm{Vol}_{\mathrm{S}}(\mathbb{B}_{g}(Z,r_{\Gamma}))\cdot\cosh^{k(g+1)}(d_{\mathrm{S}}(Z,W)/2\sqrt{2})},
\end{align}
where the implied constant depends only on $g$.

For the third term, using estimate \eqref{prop2-eqn}, we derive
\begin{align}\label{thm1-eqn10}
\frac{\mathrm{Vol}_{\mathrm{S}}(\mathbb{D}_{g}(Z,3r_{\Gamma}/4\sqrt{2}))}{\mathrm{Vol}_{\mathrm{S}}(\mathbb{B}_{g}(Z,r_{\Gamma}))\cdot \cosh^{k(g+1)}(3r_{\Gamma}/4\sqrt{2})}\leq \frac{(\cosh (3r_{\Gamma}/4\sqrt{2}))^{g^2-2}(\sinh (3r_{\Gamma}/4\sqrt{2}))^{g+2}}{\mathrm{Vol}_{\mathrm{S}}(\mathbb{B}_{g}(Z,r_{\Gamma}))\cdot \cosh^{k(g+1)}(3r_{\Gamma}/4\sqrt{2})}\leq \notag\\[0.1cm] 
\frac{1}{\mathrm{Vol}_{\mathrm{S}}(\mathbb{B}_{g}(Z,r_{\Gamma}))\cdot \cosh^{k(g+1)-g^{2}-g}(3r_{\Gamma}/4\sqrt{2})},
\end{align}
where the implied constant depends only on $g$.

For the fourth term, we compute
\begin{align}\label{thm1-eqn11}
 &\frac{1}{\mathrm{Vol}_{\mathrm{S}}(\mathbb{B}_{g}(Z,r_{\Gamma}))} \int_{3r_{\Gamma/2}}^\infty\frac{(\cosh((\rho+r_{\Gamma})/2\sqrt{2}))^{g^2-1}(\sinh((\rho+r_{\Gamma})/2\sqrt{2}))^{g+1} \dd\rho}{\cosh^{k(g+1)}(\rho/2 \sqrt{2})}\leq\notag\\[0.1cm]&\frac{1}{\mathrm{Vol}_{\mathrm{S}}(\mathbb{B}_{g}(Z,r_{\Gamma}))} \int_{3r_{\Gamma}/2}^\infty\frac{\sinh((\rho+r_{\Gamma})/2\sqrt{2}) \dd\rho}{\cosh^{k(g+1)-g^{2}-g+1}(\rho/2 \sqrt{2})}=\notag\\[0.1cm]&\frac{1}{(k(g+1)-g^{2}-g+2)\mathrm{Vol}_{\mathrm{S}}(\mathbb{B}_{g}(Z,r_{\Gamma}))}\cdot\frac{1}{ \cosh^{k(g+1)-g^{2}-g+2}(3r_{\Gamma}/4\sqrt{2})}. 
\end{align}

Combining estimates \eqref{thm1-eqn1} and \eqref{thm1-eqn2} with estimates \eqref{thm1-eqn7}--\eqref{thm1-eqn11}, and estimate \eqref{bkseries}, we arrive at the following estimate
\begin{align*}
\|\mathcal{B}_{X_{\Gamma}}^{\ell^{k}}(Z,W)\|_{\ell^k}=O_{g}\bigg( \frac{C_{k(g+1)}}{\cosh^{k(g+1)-g^{2}-g}(d_{\mathrm{S}}(Z,W)/2\sqrt{2})}\bigg)=\\O_{X_{\Gamma}}\bigg(\frac{k^{g(g+1)/2}}{\cosh^{k(g+1)-g^{2}-g}(d_{\mathrm{S}}(Z,W)/2\sqrt{2})}\bigg),
\end{align*}
where the implied constant depends only on $X_{\Gamma}$, which completes the proof of estimate \eqref{mainthm1-eqn}, for the case $d_{\mathrm{S}}(Z,W)<3r_{\Gamma}/2$, and hence, completes the proof of Theorem \ref{mainthm1}.
\end{proof}

\vspace{0.2cm}
We now prove Theorem \ref{mainthm2}.

\begin{proof}[{\bf{Proof of Theorem \ref{mainthm2}}}]
As in proof of Theorem \ref{mainthm1}, given a $Z=X+iY,\,W=U+iV\in X_{\Gamma}$ (we identify $X_{\Gamma}$ with its universal cover $\mathbb{H}_{g}$), we now work with a fixed Dirichlet fundamental domain $\mathcal{F}_{\Gamma,Z}$ centered at $Z$. 

Combining estimates \eqref{bk-est2} and \eqref{thm1-eqn1}, we have
\begin{align}\label{thm2-eqn1}
\|\mathcal{B}_{X_{\Gamma}}^{\ell^{k}}(Z,W)\|_{\ell^k}\leq \sum_{\gamma\in\Gamma}\prod_{j=1}^{g}\frac{C_{g(k+1)}}{\cosh^{k(g+1)}(r_j(Z,\gamma W))}=\notag\\\sum_{\gamma \in \Gamma\backslash\Gamma_{\infty}\cup\lbrace \mathrm{Id}_{2g}\rbrace}\prod_{j=1}^{g}\frac{C_{g(k+1)}}{\cosh^{k(g+1)}(r_j(Z,\gamma W))}+\sum_{\gamma \in \Gamma_{\infty}\backslash\lbrace\mathrm{Id}_{2g}\rbrace}\prod_{j=1}^{g}\frac{C_{g(k+1)}}{\cosh^{k(g+1)}(r_j(Z,\gamma W))}.
\end{align}
We now estimate each of the two terms on the right hand-side of the inequality in estimate \eqref{thm2-eqn1}. 

For $k\gg 1$, for the first term, adapting arguments from  the proof of Theorem \ref{mainthm1}, we have the following estimate
\begin{align}\label{thm2-eqn2}
\sum_{\gamma \in \Gamma\backslash \Gamma_{\infty}\cup\lbrace \mathrm{Id}_{2g}\rbrace}\prod_{j=1}^{g}\frac{C_{g(k+1)}}{\cosh^{k(g+1)}(r_j(Z,\gamma W))}=O_{X_{\Gamma}}\bigg(\frac{k^{g(g+1)/2}}{\cosh^{k(g+1)-g^{2}-g}(d_{\mathrm{S}}(Z,W)/2\sqrt{2})}\bigg),
\end{align}
where the implied constant depends only on $X_{\Gamma}$.

Although we work with a Dirichlet fundamental domain centered at $Z$, through scaling matrices, without loss of generality, we may assume that  we are working with a Siegel  fundamental domain $\mathcal{F}_{\Gamma}^{\mathrm{Siegel}}$, which is as defined in section \ref{sec-2.2}. Furthermore, without loss of generality, we assume that $\mathrm{det}(Y)>\mathrm{det}(V)$.

We now estimate the second term. From equations \eqref{gamma-infty0} and \eqref{gamma-inftyj}, we have
\begin{align}\label{thm2-eqn3}
\sum_{\gamma \in \Gamma_{\infty}\backslash \lbrace\mathrm{Id}_{2g}\rbrace}\prod_{j=1}^{g}\frac{1}{\cosh^{k(g+1)}(r_j(Z,\gamma W))}=\sum_{j=0}^{g-1}\sum_{\gamma \in \Gamma_{\infty}^{j}\backslash \lbrace\mathrm{Id}_{2g}\rbrace}^{j}\prod_{j=1}^{g}\frac{1}{\cosh^{k(g+1)}(r_j(Z,\gamma W))},
\end{align}
where
\begin{align*}
&\Gamma_{\infty}^{0}:=\bigg\lbrace \begin{pmatrix} \mathrm{Id}_{g}&S\\ 0 &\mathrm{Id}_{g}\end{pmatrix}\bigg|\,S\in\mathrm{Sym}_{g}(\mathbb{Z})\bigg\rbrace,\\
&\mathrm{for} \,\,1\leq j\leq g-1,\,\,\,\Gamma_{\infty}^{j}:= \bigg\lbrace \begin{pmatrix} A&AS\\ 0 &A^{-t}\end{pmatrix}\bigg|\,A=\begin{pmatrix} \mathrm{Id}_{j}&0\\ L &\mathrm{Id}_{g-j}\end{pmatrix},\,\,S=\begin{pmatrix} 0&H^{t}\\ H &M\end{pmatrix}\bigg\rbrace,\notag \\[0.1cm]
&\mathrm{where}\,\,L,\,H\in\mathbb{Z}^{(g-j)\times j},\,M\in\mathrm{Sym}_{g-j}(\mathbb{Z}).
\end{align*}

We first estimate the term, which involves summation over elements of $\Gamma_{\infty}^{0}$. For $Z = X + iY,\, W = U+iV\in\mathcal{F}_{\Gamma}^{\mathrm{Siegel}}$, and  $\gamma = \big(\begin{smallmatrix}
    I & S \\ 0 & I
\end{smallmatrix}\big)\in \Gamma^0_\infty $, we have
\begin{align}\label{thm2-eqn4}
\gamma W = W + S = U+S + i V\implies (\gamma W - \bar{Z}) = ((U+S-X)+ i(V+Y)).
\end{align}
So, we get 
\begin{align} \label{thm2-eqn5}
&\mathrm{det}{\big((U+S-X)+ i(V+Y)\big)}^2 = \notag\\&\mathrm{det}{\big(\sqrt{V+Y}^{-1}(U+S-X)\sqrt{V+Y}^{-1} + i\mathrm{Id}_{g}\big)}^2\mathrm{det}{(V+Y)}^2
\end{align}
Furthermore, recall that for any real $g\times g$ matrix $A$ we have,
\begin{align} \label{thm2-eqn6}
& |\mathrm{det}{(A+ i \mathrm{Id}_{g})}| = |\mathrm{det}{(A-i\mathrm{Id}_{g})}|\implies\notag\\ & |\mathrm{det}{(A+i\mathrm{Id}_{g})}|^2 = |\mathrm{det}{(A+ i \mathrm{Id}_{g})}\mathrm{det}{(A-i\mathrm{Id}_{g})}| = |\mathrm{det}{(A^2 + \mathrm{Id}_{g})}|.
\end{align}

From observations described in equations \eqref{thm2-eqn4}--\eqref{thm2-eqn6}, we compute
\begin{align*}
&\sum_{\gamma \in \Gamma_{\infty}^{0}\backslash \lbrace\mathrm{Id}_{2g}\rbrace}\prod_{j=1}^{g}\frac{1}{\cosh^{k(g+1)}(r_j(Z,\gamma W))}= \sum_{\gamma \in \Gamma_\infty^0\backslash \lbrace\mathrm{Id}_{2g}\rbrace}\frac{(\mathrm{det}(4YV))^{k(g+1)/2}}{|\mathrm{det}{(\gamma W - \bar{Z})}|^{k(g+1)}}= \notag\\[0.1cm]
 &\sum_{\gamma \in \Gamma_\infty^0}\frac{(\mathrm{det}(4YV))^{k(g+1)/2}}{\big|\mathrm{det}{\big(\sqrt{V+Y}^{-1}(U+S-X)\sqrt{V+Y}^{-1} + i\mathrm{Id}_{g}\big)}^2\mathrm{det}{(V+Y)}^2\big|^{k(g+1)/2}} =\notag\\[0.1cm]
&\sum_{\gamma \in \Gamma_\infty^0}\frac{(\mathrm{det}(4YV))^{k(g+1)/2}}{\big|\mathrm{det}{\big((\sqrt{V+Y}^{-1}(U+S-X)\sqrt{V+Y}^{-1})^2 +\mathrm{Id}_{g} \big)}\mathrm{det}{(V+Y)}^2\big|^{k(g+1)/2}} .
 \end{align*}
Setting
 \begin{align*}T:= \sqrt{V+Y}^{-1}(U+S-X)\sqrt{V+Y}^{-1},
 \end{align*}
we observe that
\begin{align}\label{thm2-eqn7}
&\sum_{\gamma \in \Gamma_{\infty}^{0}\backslash \lbrace\mathrm{Id}_{2g}\rbrace}\frac{1}{\cosh^{k(g+1)}(d_{\mathrm{S}}(Z, \gamma W)/2 \sqrt{2})}=\notag\\[0.1cm]&\frac{(\mathrm{det}(4YV))^{k(g+1)/2}}{\big(\mathrm{det}(Y+V) \big)^{k(g+1)}} \sum_{\gamma \in \Gamma_\infty^0}\frac{1}{\left|\mathrm{det}{(T^2 +\mathrm{Id}_{g} )}\right|^{k(g+1)/2}} 
 \leq \notag\\[0.1cm]&\frac{(\mathrm{det}(4YV))^{k(g+1)/2}}{\big(\mathrm{det}(Y+V) \big)^{k(g+1)}} \int_{S=S^t}\frac{[\dd S]}{\big|\mathrm{det}{(T^2 +\mathrm{Id}_{g} )}\big|^{k(g+1)/2}}.
\end{align}
For our choice of $T$ as above, we derive
\begin{align*}
C_g[\dd T] =  \mathrm{det}{(Y+V)}^{-(g+1)/2}[\dd S],
\end{align*}
where $C_{g}$ is a constant which only depends on $g$. Using which, we deduce that
\begin{align}\label{thm2-eqn8}
 \frac{(\mathrm{det}(4YV))^{k(g+1)/2}}{\big(\mathrm{det}(Y+V) \big)^{k(g+1)}} \int_{S=S^t}\frac{[\dd S]}{\left|\mathrm{det}{(T^2 +\mathrm{Id}_{g} )}\right|^{k(g+1)/2}}\ll_{g}\notag\\[0.1cm] \frac{(\mathrm{det}(4YV))^{k(g+1)/2}}{\big(\mathrm{det}(Y+V) \big)^{k(g+1)-(g+1)/2}}\int_{T=T^t}\frac{[\dd T]}{\left|\mathrm{det}{(T^2 +\mathrm{Id}_{g} )}\right|^{k(g+1)/2}}.
\end{align}
where the implied constant depends only on $g$.

For $k\gg 1$, using Hua's matrix beta integral from p. 33 in \cite{hua},  we arrive at the following estimate
\begin{align}\label{thm2-eqn9}
& \int_{T=T^t}\frac{[\dd T]}{\left|\mathrm{det}{(T^2 +I )}\right|^{k(g+1)/2}}=\notag\\[0.1cm]&\pi^{\frac{g(g+1)}{4}}\frac{\Gamma\big( k(g+1)/2 - \frac{g}{2} \big)}{\Gamma\big( k(g+1)/2  \big)} \prod\limits_{j = 1}^{g-1}\frac{\Gamma\big( k(g+1) - \frac{g+j}{2} \big)}{\Gamma\big( k(g+1) - j \big)}=O_{g}\big(k^{-g(g+1)/4}\big),
 \end{align}
where the implied constant depends only on $g$.

Combining estimates \eqref{thm2-eqn7}--\eqref{thm2-eqn9}, we arrive at the following estimate
\begin{align}\label{thm2-eqn10}
\sum_{\gamma \in \Gamma_{\infty}^{0}\backslash \lbrace\mathrm{Id}_{2g}\rbrace}\prod_{j=1}^{g}\frac{1}{\cosh^{k(g+1)}(r_j(Z,\gamma W))}\ll_{g} \frac{k^{-g(g+1)/4}(\mathrm{det}(4YV))^{k(g+1)/2}}{\big(\mathrm{det}(Y+V) \big)^{k(g+1)-(g+1)/2}},
\end{align}
where the implied constant depends only on $g$.

For $k\gg 1$, combining estimate \eqref{thm2-eqn10} with the elementary inequality
\begin{align*}
 \frac{1}{(\mathrm{det}{(V+Y)})^2} = \frac{1}{(\mathrm{det}(V))^2(\mathrm{det}{(I + C)})^2} \leq \frac{1}{(\mathrm{det}{(V)})^2},
\end{align*}
where $C:=V^{-1/2}YV^{-1/2}$,  we arrive at the following estimate
\begin{align}\label{thm2-eqn11}
\sum_{\gamma \in \Gamma_{\infty}^{0}\backslash \lbrace\mathrm{Id}_{2g}\rbrace}\prod_{j=1}^{g}\frac{1}{\cosh^{k(g+1)}(r_j(Z,\gamma W))}\ll_{g} \frac{k^{-g(g+1)/4}(\mathrm{det}{(4Y)})^{k(g+1)/2}}{\big(\mathrm{det}(V) \big)^{(k(g+1)-g-1)/2}},
\end{align}
where the implied constant depends only on $g$.

We now estimate the remaining terms in the series described in equation \eqref{thm2-eqn3}. For any $1\leq j\leq g-1$, $Z=X+iY,\,W=U+iV\in\mathcal{F}_{\Gamma}^{\mathrm{Siegel}}$,  and $\gamma\in \Gamma_{\infty}^{j}$, whose structure is as described in equation \eqref{gamma-inftyj}, we have
\begin{align*}
\gamma W-\overline{Z}= (A(U+S)A^t - X) + i(AVA^t + Y).
\end{align*}
Since the matrix $AVA^t + Y > 0$ (i.e., the matrix is positive definite), we infer that 
\begin{align*}
\mathrm{det}(AVA^t + Y)\mathrm{det}\big( (AVA^t + Y)^{-1/2}(A(U+S)A^t - X)(AVA^t + Y)^{-1/2} + i \mathrm{Id}_g  \big)  = & \notag \\[0.1cm]
\mathrm{det}(AVA^t + Y)\mathrm{det}\big( ( AVA^t + Y)^{-1/2}(A(U+S)A^t - X)(AVA^t + Y)^{-1/2})^2 +  \mathrm{Id}_g  \big)^{1/2},
\end{align*}
using which we derive
\begin{align}\label{thm2-eqn12}
\sum_{\gamma \in \Gamma_{\infty}^{j}\backslash \lbrace\mathrm{Id}_{2g}\rbrace}\prod_{j=1}^{g}\frac{1}{\cosh^{k(g+1)}(r_j(Z,\gamma W))} \leq \sum_A \bigg(\frac{(\mathrm{det}(4YV))^{k(g+1)/2}}{(\mathrm{det}(AVA^t + Y))^{k(g+1)}}\times \notag \\
 \int_{S\in\mathrm{Sym}_g(\mathbb{R})}\frac{[\dd S]}{\big(\mathrm{det}\big( ((AVA^t + Y)^{-1/2}S(AVA^t + Y)^{-1/2})^2 +  \mathrm{Id}_g  \big)\big)^{k(g+1)/2}}\bigg).
\end{align}
 Setting 
\begin{align*}
T:=(AVA^t + Y)^{-1/2}S(AVA^t + Y)^{-1/2},
\end{align*} 
we have 
\begin{align*}
C_g[\dd T] = \big(\mathrm{det}(AVA^t + Y)\big)^{-(g+1)/2}[\dd S],
\end{align*}
where $C_g$ is a constant  which depends only on $g$. 

So we arrive at the estimate
 \begin{align}\label{thm2-eqn13}
&\sum_{\gamma \in \Gamma_{\infty}^{j}\backslash \lbrace\mathrm{Id}_{2g}\rbrace}\prod_{j=1}^{g}\frac{1}{\cosh^{k(g+1)}(r_j(Z,\gamma W))} \ll_{g}\notag \\[0.1cm] &\sum_A \frac{(\mathrm{det}(4YV))^{k(g+1)/2}}{\big(\mathrm{det}(AVA^t + Y)\big)^{k(g+1)}} \int_{T = T^t}\frac{\mathrm{det}(AVA^t + Y)^{(g+1)/2}[\dd T]}{\mathrm{det}(T^2 + \mathrm{Id}_g)^{k(g+1)/2}},
\end{align}
where the implied constant depends only on $g$.

For $k\gg 1$, as in estimate \eqref{thm2-eqn9}, using Hua's Beta integral (p. 33 from \cite{hua}), we derive
\begin{align}\label{thm2-eqn14}
\sum_A \frac{(\mathrm{det}(4YV))^{k(g+1)/2}}{\big(\mathrm{det}(AVA^t + Y)\big)^{k(g+1)}} \int_{T = T^t}\frac{\big(\mathrm{det}(AVA^t + Y)\big)^{(g+1)/2}[\dd T]}{\mathrm{det}\big( T^2 + \mathrm{Id}_g\big)^{k(g+1)/2}}\ll_{g}\notag\\[0.1cm]
\sum_A \frac{k^{-g(g+1)/4}(\mathrm{det}(4YV))^{k(g+1)/2}}{\big(\mathrm{det}(AVA^t + Y)\big)^{k(g+1)-(g+1)/2}},
\end{align}
where the implied constant depends only on $g$

Since $\mathrm{det}(A) = 1$, we  have
\begin{align*}
\mathrm{det}(AVA^t + Y) = \mathrm{det}(A^{-1}YA^{-t} + V) = \mathrm{det}(V)\mathrm{det}(V^{-1/2}A^{-1}YA^{-t}V^{-1/2} + \mathrm{Id}_g).
\end{align*}
Observe that  
\begin{align*}
&V^{-1/2}A^{-1}YA^{-t}V^{-1/2} > 0\\
&\big\lbrace \big( V^{-1/2}A^{-1}YA^{-t}V^{-1/2} \big)^{1/2}\big|\,A\,\mathrm{comes\,\,from\,\,elements\,\,of\,\,\Gamma_{\infty}^{j}}\big\rbrace\subset \mathrm{Sym_g(\mathbb{R})}.
\end{align*} 
For $k\gg 1$, as before, we set 
\begin{align*}
Q = \big( V^{-1/2}A^{-1}YA^{-t}V^{-1/2} \big)^{1/2},
\end{align*}
and using Hua's Beta integral (p. 33 from \cite{hua}), and from arguments as the ones used in estimate \eqref{thm2-eqn9}, we compute
\begin{align}\label{thm2-eqn15}
&\sum_A \frac{k^{-g(g+1)/4}(\mathrm{det}(4YV))^{k(g+1)/2}}{\big(\mathrm{det}(AVA^t + Y)\big)^{k(g+1)-(g+1)/2}}\ll_{g}\notag\\[0.1cm]& \frac{k^{-g(g+1)/4}(\mathrm{det}(4YV))^{k(g+1)/2}}{\big(\mathrm{det}(V)\big)^{k(g+1) - (g+1)/2}}
\int_{Q = Q^t}\frac{[\dd Q]}{\mathrm{det}( Q^2 + \mathrm{Id}_g)^{k(g+1)/2 - (g+1)/4}}\ll_{g}\notag\\[0.1cm]&\frac{k^{-g(g+1)/2}( \mathrm{det}(4Y) )^{k(g+1)/2}}{\big(\mathrm{det}(V)\big)^{(k(g+1) -g-1)/2}},
\end{align}
where the implied constant depends only on $g$.

For $k\gg 1$, combining estimates \eqref{thm2-eqn12}-\eqref{thm2-eqn15} with estimate \eqref{bkseries}, we arrive at the following estimate
\begin{align}\label{thm2-eqn16}
\sum_{j=1}^{g-1}\sum_{\gamma \in \Gamma_{\infty}^{j}\backslash \lbrace\mathrm{Id}_{2g}\rbrace}\prod_{j=1}^{g}\frac{C_{k(g+1)}}{\cosh^{k(g+1)}(r_j(Z,\gamma W))} \ll_{g} \frac{( \mathrm{det}(4Y) )^{k(g+1)/2}}{\big(\mathrm{det}(V)\big)^{(k(g+1) -g-1)/2}},
\end{align}
where the implied constant depends only on $g$.

Hence, for $k\gg 1$, combining estimates \eqref{thm2-eqn1},  \eqref{thm2-eqn2}, \eqref{thm2-eqn3}, \eqref{thm2-eqn11}  and \eqref{thm2-eqn16}, we arrive at the following estimate
\begin{align*}
\|\mathcal{B}_{X_{\Gamma}}^{\ell^{k}}(Z,W)\|_{\ell^k}=O_{X_{\Gamma}}\bigg(\frac{k^{g(g+1)/4}( \mathrm{det}{(4Y)})^{k(g+1)/2}}{\big(\mathrm{det}(V) \big)^{(k(g+1)-g-1)/2}}+\frac{k^{g(g+1)/2}}{\cosh^{k(g+1)-g^{2}-g}(d_{\mathrm{S}}(z,w)/2\sqrt{2})}\bigg),
\end{align*}
where the implied constant depends only on $X_{\Gamma}$, and completes the proof of the theorem.
\end{proof}

\vspace{0.2cm}
\subsection*{Acknowledgements} The first author expresses his gratitude to J. Kramer and Anna von Pippich, and both the authors thank A. Mandal, for helpful discussions on estimates
of Bergman kernels associated to arithmetic manifolds. 

\end{document}